\documentclass{article}
\usepackage{hyperref}
\usepackage{natbib}

\usepackage{graphicx}
\DeclareGraphicsRule{.1}{mps}{.1}{}
\DeclareGraphicsRule{.2}{mps}{.2}{}
\DeclareGraphicsRule{.3}{mps}{.3}{}
\DeclareGraphicsRule{.4}{mps}{.4}{}
\DeclareGraphicsRule{.5}{mps}{.5}{}
\DeclareGraphicsRule{.6}{mps}{.6}{}
\DeclareGraphicsRule{.7}{mps}{.7}{}
\usepackage{graphics}

\usepackage{tikz}
\usepackage{amsmath}
\usepackage{pgfplots}
\usetikzlibrary{patterns}
\usetikzlibrary{decorations.pathreplacing}
\usetikzlibrary{arrows.meta}

\usepackage{amsfonts}
\usepackage{amssymb}
\usepackage{enumerate}
\usepackage{xcolor}
\usepackage[normalem]{ulem}
\usepackage{comment}

\DeclareMathOperator{\argmax}{arg\,max}

\setcounter{MaxMatrixCols}{30}
\newtheorem{theorem}{Theorem}[section]

\newtheorem{definition}[theorem]{Definition}
\newtheorem{example}[theorem]{Example}

\newtheorem{lemma}[theorem]{Lemma}
\newtheorem{assumption}[theorem]{Assumption}

\newtheorem{remark}[theorem]{Remark}

\newenvironment{proof}[1][Proof]{\textbf{#1.} }{\ \rule{0.5em}{0.5em}}

\newcommand{\E}{{\rm \bf E}}

\newcommand{\prob}{{\rm \bf P}}

\newcommand{\supp}{{\rm supp}}
\newcommand{\dN}{{\bf N}}

\newcommand{\dR}{{\bf R}}

\newcommand{\calE}{{\cal E}}

\newcommand{\calH}{{\cal H}}

\newcommand{\ep}{\varepsilon}

\newcounter{figurecounter}
\newcounter{figure:table}
\newcounter{figure:table1}
\newcounter{figure:game}
\setcounter{figurecounter}{1}

\newcommand{\black}{\color{black}}

\usepackage[textwidth=30mm]{todonotes}

\title{Undiscounted Equilibrium in Positive Recursive Absorbing Games with Non-Rectangular Absorption Structure%
\thanks{
The authors thank J\'anos Flesch for commenting on an earlier version of the paper.
Solan acknowledges the support of the Israel Science Foundation grant \#211/22.}}

\author{Eilon Solan\thanks{School of Mathematical Sciences, Tel Aviv University, Tel
Aviv 69978, Israel. E-mail: \textsf{eilons@tauex.tau.ac.il}.},
and Nicolas Vieille\thanks{Department of Economics and Decision Sciences, HEC Paris, 1, rue de
la Lib\'{e}ration, 78 351 Jouy-en-Josas, France. E-mail: \textsf{vieille@hec.fr}.}}

\date{\today}

\begin{document}

\maketitle

\begin{abstract}
An \emph{absorbing game} is a stochastic game with a single nonabsorbing state.
Such a game is called \emph{recursive} if all players receive a payoff of $0$
in the nonabsorbing state, and \emph{positive} if all payoffs in absorbing states are positive.
An action profile is \emph{nonabsorbing} if, when it is played, the game remains in the nonabsorbing state with probability 1.
The set of nonabsorbing action profiles can be partitioned into the connected components of an undirected graph, whose vertices are these profiles, with two vertices joined by an edge whenever the corresponding profiles differ in the action of a single player.
A connected component is said to be \emph{rectangular} if it is the Cartesian product of subsets of the players’ action sets.

We prove that every positive recursive absorbing game whose nonabsorbing components are all non-rectangular admits an undiscounted equilibrium payoff.
\end{abstract}

\noindent\textbf{Keywords:}
Stochastic games, absorbing games, 
positive recursive games, 
undiscounted equilibrium, uniform equilibrium, absorption structure.

\bigskip
\noindent\textbf{MSC2020 Classification:}  91A06, 91A10, 91A15.

\section{Introduction}

One of the main open problems in game theory to date is whether every multiplayer stochastic game admits an undiscounted equilibrium payoff.
This open problem was answered affirmatively for two-player zero-sum games (\cite{mertens1981stochastic}),
for two-player non-zero-sum games (\cite{vieille2000two, vieille2000recursive}),
and for various classes of stochastic games with more than two players (see, e.g., \cite{Solan1999ThreePlayerAbsorbing, SolanVieille2001QuittingGames}, \cite{simon2007structure, simon2012topological}, \cite{flesch2007stochastic, flesch2008product, flesch2009stochastic}, and \cite{SolanSolanSolan2020JointlyControlled}.

Absorbing games are stochastic games with a single nonabsorbing state.
This class of games was introduced by \cite{kohlberg1974repeated},
who proved the existence of the limit value in two-player zero-sum absorbing games,
and has since been instrumental in the study of stochastic games.
Indeed, the result of \cite{kohlberg1974repeated} was extended to all stochastic games by \cite{mertens1981stochastic}.
Similarly, 
the existence of an undiscounted equilibrium payoff in two-player non-zero-sum absorbing games,
proved by \cite{vrieze1989equilibria},
was extended to all two-player stochastic games by \cite{vieille2000two, vieille2000recursive}.
And the characterization of the value in zero-sum absorbing games (\cite{laraki2010explicit}) was extended to all zero-sum stochastic games by \cite{attia2019formula}.

Some results that have been proven for absorbing games have so far resisted extension to stochastic games, e.g.,
the existence of an undiscounted equilibrium in three-player absorbing games (\cite{Solan1999ThreePlayerAbsorbing}) and in absorbing team games (\cite{Solan2000AbsorbingTeamGames}),
the existence of an undiscounted normal-form correlated equilibrium in multiplayer absorbing games (\cite{SolanVohra2002CorrelatedAbsorbingGames}),
and weak approachability in absorbing games (\cite{ragel2024weak}).
Some subclasses of absorbing games, such as quitting games (\cite{SolanVieille2001QuittingGames}) and positive recursive absorbing games (\cite{SolanSolan2021SunspotEquilibrium}) were also studied and provided tools and useful insights for the study of stochastic games.

\paragraph{Main result and main idea behind the proof.}

In this paper, we focus on multiplayer absorbing games for which the stage payoff of all players in the nonabsorbing state is $0$,
and all absorbing payoffs are positive. We prove that an undiscounted equilibrium payoff exists in 
these games,
under an additional restriction on the transition structure.

Our proof technique is an extension of the approach of \cite{SolanVieille2001QuittingGames}.
Assuming the absorbing payoffs are bounded from above by $1$,
and denoting by $I$ the set of players,
we define a function $f$ from a subset of $[0,1]^I$ to itself, with the following property. Given $w$ and $\ep>0$, there is an integer $T(w)$ and a strategy profile  $\sigma(w)$ such that (i) $\sigma(w)$ is an $\ep$-equilibrium in the $T(w)$-stage version of the game that ends with a terminal payoff of $w$ in case the game did not absorb before, (ii) the probability given $\sigma(w)$ that the game absorbs is positive and (iii) the payoff induced by $\sigma(w)$ is $\ep$-close to $f(w)$.

We use a finite orbit of $f$ to construct a sequence $(w^{(k)})_{k=0}^{K_0}$ such that $w^{(k)} = f(w^{(k+1)})$, with $w^{(K_0)}= (1,\ldots, 1)$.
Finally, we define a strategy profile $\sigma^*$
by letting the players follow $\sigma(w^{(0)})$ for $T(w^{(0)})$ stages, then $\sigma(w^{(1)})$ for $N(w^{(1)})$ stages, and so on. 

\paragraph{Significance of the paper.}

Beyond advancing the long-standing open problem of whether every multiplayer stochastic game admits an undiscounted equilibrium payoff, the paper makes several conceptual contributions.

First, we extend the dynamical-system approach to stochastic games, initiated by \cite{SolanVieille2001QuittingGames} for a restricted class of quitting games, to a broad class of absorbing games.


Second, together with \cite{SolanSolanSolan2020JointlyControlled}, our result delineates where the main difficulty in establishing equilibrium in recursive absorbing games lies. The non-rectangularity condition identifies a large, easily checkable class of games for which the equilibrium problem is now resolved.

Third, the ideas developed here can be used to study more general stochastic games and other solution concepts. In a companion paper \cite{solanvieillepublic}, building on and refining the techniques introduced here, we show that in the presence of a public correlation device, 
every positive recursive stochastic game (not necessarily absorbing) admits an undiscounted equilibrium payoff. Thus, the present paper provides the core building block for a more general theory.

Fourth, our arguments can be used to substantially simplify existing proofs in the literature, such as the proof of the main result of \cite{SolanVieille2001QuittingGames}, thereby clarifying the underlying structure of those results.
\black






\paragraph{Structure of the paper.}

The model and the main result are presented in Section~\ref{sec:model}.
Section~\ref{section:basic} presents some new and old concepts that will be used in the proof.
The proof appears in Section~\ref{section:proof},
and open problems are discussed in Section~\ref{section:discussion}.

\section{Model and Main Result}
\label{sec:model}

\subsection{The Model}

\begin{definition}[Recursive absorbing game]
A \emph{recursive absorbing game} is a tuple 
$\Gamma = (I, (A_i)_{i \in I}, (r_i)_{i \in I}, p)$,
where
\begin{itemize}
\item 
$I$ is a finite set of players.
\item 
$A_i$ is a finite set of actions, for each player $i \in I$.
Denote the set of action profiles by $A := \prod_{i \in I} A_i$.
\item 
$r_i : A \to \dR$ is player~$i$'s absorbing payoff function,
for each $i \in I$.
\item 
$p : A \to [0,1]$ is the absorption probability function.
\end{itemize}
\end{definition}

The game is played as follows.
At every stage $n \in \dN$, 
each player $i$ selects an action $a_i^n \in A_i$ if the game was not absorbed before.
With probability $p(a^n)$, where $a^n := (a^n_i)_{i \in I}$,
the game is absorbed, and each player $i \in I$ receives the terminal payoff $r_i(a^n)$.
With probability $1-p(a^n)$ the game is not absorbed.

\begin{definition}[Positive recursive absorbing game]
A recursive absorbing game $\Gamma = (I, (A_i)_{i \in I}, (r_i)_{i \in I}, p)$ is \emph{positive}
if $r_i(a) > 0$, for every $i \in I$ and every $a \in A$.
\end{definition}

For convenience, 
when the game is positive and recursive, we will assume that the payoff function $r$ is bounded between $0$ and $1$.

We denote by $\theta \in \dN\cup \{+\infty\}$ the stage at which absorption occurs. The game terminates in stage $\theta$, yet it will be convenient to assume that players choose actions in all stages $n\in \dN$, even after $\theta$.

The set of \emph{histories}
is the set $H := \bigcup_{n=0}^\infty A^n$.
A history $(a^1,a^2,\dots,a^n)$ is interpreted as the sequence of actions in the first $n$ stages.
A (behavior) \emph{strategy} of player~$i$ is a function 
$\sigma_i : H \to \Delta(A_i)$.
The set of all strategies of player~$i$ is denoted $\Sigma_i$,
and the set of all strategy profiles of all players except $i$ is denoted $\Sigma_{-i} = \prod_{j \neq i} \Sigma_j$.
A \emph{play} is an infinite sequence of action profiles $(a^1,a^2,\dots)$.

Every strategy profile $\sigma = (\sigma_i)_{i \in I}$
induces a probability distribution $\prob_\sigma$ on the set of plays, endowed with the cylinder $\sigma$-algebra.
We denote by $\E_\sigma$ the corresponding expectation operator.

The (expected) \emph{undiscounted payoff} under strategy profile $\sigma$ is
\[ \gamma(\sigma) := \E_\sigma[r(a^\theta){1_{\theta< +\infty}}] \in [0,1]^I. \]

Note that for action profiles $a\in A$ with $p(a) = 0$,
the value $r(a)$ does not affect the undiscounted payoff.

For every vector $b = (b_j)_{j \in I}$
we denote $b_{-i} = (b_j)_{j \neq i}$,
and for every subset $J \subseteq I$
we denote $b_J = (b_j)_{j \in J}$ and $b_{-J} = (b_j)_{j \not\in J}$.

\begin{definition}[Undiscounted equilibrium]
Let $\ep > 0$.
The strategy profile $\sigma^*$ is an \emph{undiscounted $\ep$-equilibrium} if for every $i \in I$ and every $\sigma_i \in \Sigma_i$,
\[ \gamma_i(\sigma^*) \geq \gamma_i(\sigma_i,\sigma^*_{-i}) - \ep.\]
A vector $z \in \dR^I$ is an \emph{undiscounted equilibrium payoff} if $z = \lim_{k \to \infty} \gamma(\sigma^k)$,
for some sequence $(\sigma^k)_{k \in \dN}$ such that 
$\sigma^k$ is a $\frac{1}{k}$-equilibrium,
for each $k \in \dN$.
\end{definition}

\subsection{The Absorption Structure}

\begin{definition}[Set of nonabsorbing action profiles $B$]
The set of nonabsorbing action profiles is
\[ B := \{ a \in A \colon p(a) = 0\}. \]
\end{definition}

\begin{definition}[Connected components of nonabsorbing action profiles]
Consider the graph whose set of vertices is the set of nonabsorbing action profiles $B$,
and there is an edge between $a$ and $a'$ if 
these two action profiles differ in the action of at most one player;
that is,
there is $i \in I$ such that $a_{-i} = a'_{-i}$.
Any connected component of this graph is called a \emph{connected component of nonabsorbing action profiles}.
\end{definition}

For simplicity, 
we will shorten the term ``connected component of nonabsorbing action profiles'' to ``connected component''.
Denote by $(B^\ell)_{l=1}^L$ the collection of connected components, so that $B = \bigcup_{\ell =1}^L B^\ell$.

\begin{definition}[rectangular connected component]
The connected component $B^\ell$ is \emph{rectangular}
if it is a product set $B^\ell = \prod_{i \in I} B^\ell_i$.
\end{definition}

\begin{example}\label{example:2}
In Figure~\ref{fig:a-grid} we depict the absorption structure of a three-player absorbing game,
where Player~1 selects a row,
Player~2 selects a column,
Player~3 selects a matrix,
and absorbing entries are marked with an asterisk.
In this game there are three connected components of nonabsorbing action profiles:
\begin{itemize}
\item 
One contains a single nonabsorbing action profile,
and in particular is rectangular:
$\{(a'_1,a'_2,a''_3)\}$.
\item 
One is rectangular and contains six nonabsorbing action profiles:

$\{a'''_1\} \times \{a''_2,a'''_2\} \times \{a_3,a'_3,a''_3\}$.
\item 
One contains six nonabsorbing action profiles and is not rectangular:

$\{(a_1,a_2),(a_1,a'_2),(a'_1,a_2)\} \times \{a_3,a'_3\}$.
\end{itemize}
\end{example}

\begin{figure}[ht]
  \centering
\newcommand{\rectwidth}{0.8cm}  
\newcommand{\rectheight}{0.8cm} 
\begin{tikzpicture}
  \begin{scope}[x=\rectwidth, y=\rectheight]
  \draw[xstep=\rectwidth, ystep=\rectheight] (0,0) grid (4,4);

    \node[left] at (0,3.5)  {$a_1$};
    \node[left] at (0,2.5)  {$a'_1$};
    \node[left] at (0,1.5)  {$a''_1$};
    \node[left] at (0,0.5)  {$a'''_1$};

    \node[above] at (0.5,4) {$a_2$};
    \node[above] at (1.5,4) {$a'_2$};
    \node[above] at (2.5,4) {$a''_2$};
    \node[above] at (3.5,4) {$a'''_2$};

    \node[above] at (2,4.5) {$a_3$};

    \node at (0.5,0.5) {*};
    \node at (1.5,0.5) {*};
    \node at (0.5,1.5) {*};
    \node at (1.5,1.5) {*};
    \node at (2.5,1.5) {*};
    \node at (3.5,1.5) {*};
    \node at (1.5,2.5) {*};
    \node at (2.5,2.5) {*};
    \node at (3.5,2.5) {*};
    \node at (2.5,3.5) {*};
    \node at (3.5,3.5) {*};
  \end{scope}

  \begin{scope}[x=\rectwidth, y=\rectheight, shift={(5,0)}]
  \draw[xstep=\rectwidth, ystep=\rectheight] (0,0) grid (4,4);

    \node[left] at (0,3.5)  {$a_1$};
    \node[left] at (0,2.5)  {$a'_1$};
    \node[left] at (0,1.5)  {$a''_1$};
    \node[left] at (0,0.5)  {$a'''_1$};

    \node[above] at (0.5,4) {$a_2$};
    \node[above] at (1.5,4) {$a'_2$};
    \node[above] at (2.5,4) {$a''_2$};
    \node[above] at (3.5,4) {$a'''_2$};

    \node[above] at (2,4.5) {$a'_3$};

    \node at (0.5,0.5) {*};
    \node at (1.5,0.5) {*};
    \node at (0.5,1.5) {*};
    \node at (1.5,1.5) {*};
    \node at (2.5,1.5) {*};
    \node at (3.5,1.5) {*};
    \node at (1.5,2.5) {*};
    \node at (2.5,2.5) {*};
    \node at (3.5,2.5) {*};
    \node at (2.5,3.5) {*};
    \node at (3.5,3.5) {*};
  \end{scope}

  \begin{scope}[x=\rectwidth, y=\rectheight, shift={(10,0)}]
  \draw[xstep=\rectwidth, ystep=\rectheight] (0,0) grid (4,4);

    \node[left] at (0,3.5)  {$a_1$};
    \node[left] at (0,2.5)  {$a'_1$};
    \node[left] at (0,1.5)  {$a''_1$};
    \node[left] at (0,0.5)  {$a'''_1$};

    \node[above] at (0.5,4) {$a_2$};
    \node[above] at (1.5,4) {$a'_2$};
    \node[above] at (2.5,4) {$a''_2$};
    \node[above] at (3.5,4) {$a'''_2$};

    \node[above] at (2,4.5) {$a''_3$};

    \node at (0.5,0.5) {*};
    \node at (1.5,0.5) {*};
    \node at (0.5,1.5) {*};
    \node at (1.5,1.5) {*};
    \node at (2.5,1.5) {*};
    \node at (3.5,1.5) {*};
    \node at (0.5,2.5) {*};
    \node at (2.5,2.5) {*};
    \node at (3.5,2.5) {*};
    \node at (0.5,3.5) {*};
    \node at (1.5,3.5) {*};
    \node at (2.5,3.5) {*};
    \node at (3.5,3.5) {*};
  \end{scope}
\end{tikzpicture}
  \caption{The absorption structure of the three-player recursive absorbing game in Example~\ref{example:2}.}
  \label{fig:a-grid}
\end{figure}
\setcounter{figurecounter}{2}

\subsection{Main Result}

It follows from \cite{SolanVieille2001QuittingGames}
that all recursive absorbing games
that satisfy the following restrictive conditions admit an undiscounted equilibrium payoff:
\begin{itemize}
\item Exactly one action profile $\widehat a$ is nonabsorbing.
\item For every player~$i \in I$ and every action profile $b_{-i}\in A_{-i}$, one has 
$r_i(a_i,\widehat a_{-i}) \geq r_i(a_i,b_{-i})$, for each $a_i$ in the set
\[\widehat A_i := \argmax_{b_i\in A_i\colon p(b_i,\widehat{a}_{-i})>0} r_i(b_i,\widehat a_{-i}) \] 
of best absorbing actions against $\widehat a$.
\end{itemize}

\cite{SolanSolanSolan2020JointlyControlled}
proved that all positive recursive absorbing games
that satisfy the following conditions 
admit an undiscounted equilibrium payoff:
\begin{itemize}
\item 
There is exactly one connected component which is rectangular, 
so that the set of nonabsorbing entries is a Cartesian product $\prod_{i \in I} B_i$.
Moreover,
$|B_i| \geq 2$ for at least two players $i \in I$.
\item 
Each player has exactly one action that is not part of any nonabsorbing entry:
$|A_i\setminus B_i|= 1$, for every $i \in I$.
\item 
The probability of absorption in all entries is either $0$ or $1$,
that is, $p(a) \in \{0,1\}$ for every $a \in A.$
\end{itemize}

Our main result shows that an undiscounted equilibrium payoff exists when \emph{all} connected components are not rectangular.

\begin{theorem}[Existence of undiscounted equilibrium]
\label{theorem:uniform}
Every positive recursive absorbing game that has no rectangular connected component admits an undiscounted equilibrium payoff.
\end{theorem}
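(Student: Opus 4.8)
The plan is to adapt and extend the dynamical-system method of \cite{SolanVieille2001QuittingGames}. We may assume that the set $B$ of nonabsorbing profiles is nonempty, since otherwise $\Gamma$ is a one-shot game and has a Nash equilibrium; recall also that $r$ is valued in $(0,1]$. The first step is to extract the combinatorial content of non-rectangularity. Fix a connected component $B^\ell$, and let $B^\ell_i$ denote the projection of $B^\ell$ on $A_i$. Pick a profile $c\in\prod_{i\in I}B^\ell_i\setminus B^\ell$ and move towards it from an arbitrary profile of $B^\ell$ by changing one coordinate at a time; at the last moment the walk is still in $B^\ell$ one reaches a profile $a\in B^\ell$, a player $j$, and an action $b_j\in B^\ell_j$ with $b_j\neq a_j$ such that $p(b_j,a_{-j})>0$ (if $(b_j,a_{-j})$ were nonabsorbing it would be adjacent to $a$, hence in $B^\ell$). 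In words: from inside a non-rectangular component some player can unilaterally trigger absorption using an action that is itself played elsewhere in that component. Varying the starting profile and the order in which coordinates are changed supplies a family of such internal triggers, which the honest play will exploit.

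Second, I would carry out the ``local'' analysis that defines the map $f$. Let $\underline v_i$ denote player~$i$'s (undiscounted) individually rational level — well defined for these games — and put $W:=\{w\in[0,1]^I:w_i\ge\underline v_i\ \text{for all }i\in I\}$, a compact set containing $(1,\dots,1)$. For $w\in W$, consider strategies that have the players cycle through $B^\ell$ while mixing, with vanishing per-stage weight, onto the internal trigger actions, with the trigger probabilities and a reversion (punishment) parameter $\eta$ tuned so that: (a) no player gains from a deviation confined to one block, because such a deviation is detected and met by an $\eta$-punishment of the deviator during the remaining stages of the block; (b) the absorption probability inside a block is bounded below by a constant $\delta>0$ that does not depend on $w$; and (c) as the trigger weights and $\eta$ tend to $0$, the induced payoff converges to a vector $f(w)\in W$. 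This yields exactly properties (i)--(iii) of the introduction for the $T(w)$-stage truncation with terminal payoff $w$, together with a map $f:W\to W$.

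Third, I would iterate $f$ from the top: set $w^{(K_0)}:=(1,\dots,1)$ and $w^{(k-1)}:=f(w^{(k)})$ for $k=K_0,\dots,1$, choosing $K_0$ large enough that $(1-\delta)^{K_0}<\ep$. The profile $\sigma^*$ plays $\sigma(w^{(0)})$ for $T(w^{(0)})$ stages, then $\sigma(w^{(1)})$, and so on up to $\sigma(w^{(K_0)})$, reverting to an $\eta$-punishment of any player detected deviating. A telescoping estimate based on the identities $w^{(k-1)}=f(w^{(k)})$ and on the geometric decay (rate $\le 1-\delta$) of the probability that play is still in block $k$ shows that $\gamma(\sigma^*)$ is within $O(\ep/\delta)$ of $f(w^{(0)})$, and that the event of never absorbing has probability below $\ep$. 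The equilibrium inequalities then hold: a detected deviation by player~$i$ earns at most $\underline v_i+\eta$, which is at most $\gamma_i(\sigma^*)+O(\ep/\delta)$ since $\gamma(\sigma^*)\in W$ up to $O(\ep/\delta)$, while a deviation confined to a single block is unprofitable by property~(i); with the constants chosen appropriately this makes $\sigma^*$ an undiscounted $\ep'$-equilibrium for a prescribed $\ep'$. Sending $\ep'\downarrow0$ and passing to a convergent subsequence of the resulting payoffs yields an undiscounted equilibrium payoff.

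The crux is the second step: building the local strategies $\sigma(w)$ and verifying (i)--(iii) uniformly over $w\in W$. Two points are delicate. First, the per-block absorption probability must be bounded below by a constant independent of $w$; without this the choice of $K_0$, hence the whole telescoping, collapses. Second, a deviator must not be able to profit from acting near the end of a block, where no room is left to punish inside that block — which forces the triggers to be ``armed'' only away from block boundaries and requires the following block to recognize and continue an ongoing punishment. Non-rectangularity is exactly what makes both requirements compatible: it guarantees, inside a single component, enough internal triggers to force absorption with positive probability while the construction stays incentive compatible. In the rectangular case no player can trigger absorption without leaving the component, and this is precisely the obstruction the argument does not overcome, which is why the hypothesis cannot simply be dropped.
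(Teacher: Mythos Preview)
Your proposal shares the broad architecture of the paper (define a map $f$ on continuation payoffs, concatenate blocks along an orbit of $f$), but it misidentifies where non-rectangularity enters and leaves the construction of $f$ genuinely incomplete.

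The main gap is in your Step~2. The ``internal triggers'' you extract are \emph{unilateral} exits: a single player $j$ switches from $a_j$ to $b_j$ and the play absorbs. For that to be incentive-compatible, player $j$ must weakly prefer the absorbing payoff $r_j(b_j,a_{-j})$ to the continuation $w_j$. When $w_j>\rho_j(x)$ for every player $j$ and every relevant nonabsorbing $x$ --- the region the paper calls $W_H\setminus W$ --- no single player is willing to trigger, and monitoring does not help: with vanishing trigger weight, ``triggering once'' and ``never triggering'' are statistically indistinguishable for an individual player. The paper handles this case with a \emph{joint exit} $(J,a_J)$, $|J|\ge 2$: absorption requires all players in $J$ to play $a_j$ simultaneously, so the block length $T$ satisfies $T\beta\sim\beta^{-(|J|-1)}\to\infty$, and each player's empirical frequency of $a_j$ can be tested. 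Non-rectangularity is used precisely to guarantee that joint exits exist (Lemma~\ref{lemma:joint:rectangular} and Lemma~\ref{lemma:WH}); the unilateral internal trigger you describe is a different, weaker object that does not resolve the $W_H$ case. The paper's $f$ is in fact defined by a three-way split --- a Nash equilibrium of the one-shot game $G(w)$ when $w\in\bigcap_x W_L(x)$, a best absorbing response of an \emph{indifferent} player when $w\in W$, and a joint-exit perturbation when $w\in W_H\setminus W$ --- after first disposing of the possibility that some joint exit already dominates $\rho(x)$ (Lemma~\ref{lemma:multi}). None of these distinctions appears in your Step~2.

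A second gap is the asserted uniform lower bound $\delta>0$ on the per-block absorption probability. In the paper's construction $\mu(w)$ is \emph{not} bounded away from zero (e.g.\ in the $W$ case $\mu(w)=\ep\,p(a_{i_0}(x),x_{-i_0})$, and the relevant $x$ varies with $w$). For this reason naive backward iteration $w^{(k-1)}=f(w^{(k)})$ does not suffice; the paper instead invokes an approximate-orbit theorem (Theorem~4 of \cite{SolanSolan2020QuittingLCP}) to obtain a finite sequence with $\sum_k\mu(w^{(k)})\ge 1/\delta$ and $\sum_k\|w^{(k)}-f(w^{(k+1)})\|_\infty\le\delta$, and it is this cumulative bound, not a pointwise one, that drives the telescoping estimate. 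Your Step~3 relies on the pointwise bound, which you have not established and which is, in the paper's setup, false.
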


\begin{remark}[Uniform equilibrium]
Theorem~\ref{theorem:uniform} also applies to the
stronger concept of \emph{uniform equilibrium payoff}, see 
\cite{mertens2015repeated}, Section VII.4.
A vector $z \in \dR^I$ is a uniform equilibrium payoff if for every $\ep > 0$ there is a strategy profile $\sigma$ which is
 an $\ep$-equilibrium in all games with finite horizon,
 and induces a payoff vector within $\ep$ of $z$ in all such games, provided the horizon is sufficiently long.

In positive recursive absorbing games, the expected average payoff $\gamma_n(\sigma)$ over the first $n$ stages is nondecreasing in $n$, and converges to the undiscounted payoff $\gamma(\sigma)$. This implies
that the two concepts of equilibrium payoffs coincide for such games.

Thus, Theorem~\ref{theorem:uniform} implies that every positive recursive absorbing game that has no rectangular connected component admits a uniform equilibrium payoff as well.
\end{remark}

\section{Basic Concepts}
\label{section:basic}


\subsection{Mixed-Action Profiles and Connected Components}

For every mixed action $x_i \in  \Xi_i := \Delta(A_i)$,
denote
\[ \supp(x_i) := \{ a_i \in A_i \colon x_i(a_i) > 0\}. \]
Set $\Xi := \prod_{j \in I} \Xi_j$ 
and 
$\Xi_{-i} := \prod_{j \neq i} \Xi_j$. 
For every mixed action profile $x \in \Xi$ set $\supp(x) := \prod_{i \in I} \supp(x_i)$.

The set of nonabsorbing mixed action profiles whose support is contained in the connected component $B^\ell$ is
\[ X^\ell := \left\{ x \in \Xi \colon \supp(x) \subseteq B^\ell \right\}, \ \ \ \forall \ell \in [L] := \{1,2,\ldots,L\}, \]
and the set of all nonabsorbing mixed action profiles is
\[ X := \bigcup_{l=1}^L X^\ell. \]
We note that each $X^\ell$ is a connected subset of $\Xi$:
if $x,x' \in X^\ell$,
then there is a continuous path $(x_t)_{t \in [0,1]}$ in $X^\ell$ that starts at $x_0 = x$ and ends at $x_1 = x'$.


\subsection{The minmax value}

For every $i\in I$, 
denote by $v_i:=\inf_{\sigma_{-i}\in \Sigma_{-i}}\sup_{\sigma_i\in \Sigma_i} \gamma_i(\sigma)$ player~$i$'s minmax value.
Since payoffs are non-negative and  bounded by $1$, we have $0 \leq v_i\leq 1$ for each $i \in I$.

A \emph{minmaxing profile against $i$} is a profile $(\sigma_i,\sigma_{-i})$ such that 
 $\gamma_i(\widetilde \sigma_i,\sigma_{-i})\leq v_i$ for each $\widetilde \sigma_i \in \Sigma_i$.

In the proof, 
it is convenient to use stationary minmaxing profiles.
The following result is a generalization of \cite{flesch1996recursive} to multiplayer positive recursive absorbing games.

\begin{lemma}
    For each  $i \in I$ there exists a stationary minmaxing profile.
\end{lemma}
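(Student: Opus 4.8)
The plan is to establish the existence of a stationary minmaxing profile against each player $i$ by combining a standard fixed-point/compactness argument with the structural features of positive recursive absorbing games. The key observation driving the proof is that in a positive recursive absorbing game, if the players $-i$ fix a stationary profile $\sigma_{-i}$, then player $i$'s best-response problem reduces to a problem with a very simple structure: from the nonabsorbing state, player $i$ either manages to generate absorption with positive probability (earning some positive payoff) or fails to absorb entirely (earning $0$). Because all absorbing payoffs are positive, $v_i > 0$ in general is not guaranteed — it may well be that $v_i = 0$ if players $-i$ can, with a stationary profile, prevent player $i$ from profitably absorbing — but either way the relevant optimization is tractable.

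First I would recall the classical characterization for a \emph{fixed} stationary profile $\sigma_{-i}$ of player $-i$: against such a profile, player $i$ faces a one-player absorbing (hence essentially a Markov decision) problem, and by standard MDP theory there is a stationary best response, with value $\gamma_i^*(\sigma_{-i}) := \sup_{\sigma_i} \gamma_i(\sigma_i, \sigma_{-i})$ attained. Moreover $\gamma_i^*(\sigma_{-i})$ can be computed as the value of a finite optimization: player $i$ wants to maximize, over his stationary actions $x_i \in \Xi_i$, the one-shot probability-weighted absorbing payoff, conditioned on the event that the play eventually absorbs; the recursive (payoff $0$ at the nonabsorbing state) structure means there is no "continuation value" to track beyond the absorption event itself. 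Next I would argue that $\sigma_{-i} \mapsto \gamma_i^*(\sigma_{-i})$ is lower semicontinuous (a sup of continuous functions, since for each fixed stationary $\sigma_i$ the map $\sigma_{-i}\mapsto \gamma_i(\sigma_i,\sigma_{-i})$ is continuous on the compact set of stationary profiles — this continuity uses positivity/recursiveness to avoid the usual discontinuity of undiscounted payoffs in absorbing games), so that $\inf_{\sigma_{-i}} \gamma_i^*(\sigma_{-i})$ is attained at some stationary $\sigma_{-i}^*$, and this infimum equals $v_i$. The equality with the true minmax value $v_i = \inf_{\sigma_{-i}\in\Sigma_{-i}}\sup_{\sigma_i}\gamma_i(\sigma)$ — i.e. that \emph{behavior} strategies of the opponents cannot push player $i$'s guaranteed payoff below what stationary opponent strategies already achieve — is exactly the content of the cited generalization of \cite{flesch1996recursive}; I would invoke that the opponents' minmaxing power is unchanged when restricted to stationary profiles, which in the positive recursive absorbing setting follows because the best-response value is a function only of the current (single nonabsorbing) state and the recursive structure makes the problem "memoryless."

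The main obstacle I expect is the semicontinuity/continuity step: undiscounted payoffs in absorbing games are notoriously discontinuous in the strategy profile (small perturbations toward an absorbing action can cause a jump), and one must use the positivity and recursiveness carefully to control this. Concretely, the delicate point is showing that along a sequence $\sigma_{-i}^n \to \sigma_{-i}$ of stationary profiles, $\liminf_n \gamma_i^*(\sigma_{-i}^n) \geq \gamma_i^*(\sigma_{-i})$ — or rather the reverse inequality needed to conclude the infimum is attained and equals $v_i$ — which requires relating the limiting best-response behavior to a best response against the limit profile, handling the case where absorption probabilities degenerate to $0$ in the limit. Here the positivity of all absorbing payoffs is what saves us: when absorption probability under the candidate play tends to $0$, the conditional absorbing payoff stays bounded, so the product (probability of eventual absorption) $\times$ (conditional payoff) behaves well. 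Once this is in hand, the existence of a stationary minmaxing profile is immediate: take $\sigma_{-i}^*$ attaining the infimum, and note that by construction $\gamma_i(\widetilde\sigma_i, \sigma_{-i}^*) \leq \gamma_i^*(\sigma_{-i}^*) = v_i$ for every $\widetilde\sigma_i \in \Sigma_i$, which is exactly the definition of a minmaxing profile against $i$ (completing it with any stationary best response $\sigma_i^*$ for player $i$ if one wants a full profile).
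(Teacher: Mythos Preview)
Your approach differs from the paper's and contains a genuine gap. The paper proceeds via the discounted minmax: it takes a stationary profile $x_{-i,\lambda}$ achieving the $\lambda$-discounted minmax $v_{i,\lambda}$ (which always exists by standard theory), lets $x_{-i,0}$ be any limit point as $\lambda\to 0$, and verifies directly that $x_{-i,0}$ is minmaxing by a simple two-case check on whether $(x_i,x_{-i,0})$ is absorbing. The use of the discounted game is what guarantees, for free, that the resulting value is the true behavioral minmax $v_i=\lim_{\lambda\to 0}v_{i,\lambda}$.

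Your route instead minimizes $\gamma_i^*(\sigma_{-i}):=\sup_{\sigma_i}\gamma_i(\sigma_i,\sigma_{-i})$ directly over \emph{stationary} $\sigma_{-i}$, using lower semicontinuity to get a minimizer $\sigma_{-i}^*$. That part is essentially fine (note, though, that $\gamma_i(x_i,\cdot)$ is not continuous as you claim --- it jumps down to $0$ when the absorption probability vanishes --- but it is lower semicontinuous, which is what you need). The gap is the second step: you must show that the minimum over stationary $\sigma_{-i}$ equals $v_i$, i.e.\ that behavior strategies for the opponents cannot push player~$i$'s guaranteed payoff strictly below what stationary ones achieve. You write that this ``is exactly the content of the cited generalization of \cite{flesch1996recursive}'' --- but that generalization \emph{is} the lemma you are asked to prove, so invoking it is circular. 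The ``memoryless'' heuristic you offer does not do the job: having a single nonabsorbing state implies that \emph{player~$i$}'s best response to a stationary profile is stationary (an MDP fact), but it does not by itself imply that the \emph{minimizing coalition} loses nothing by restricting to stationary strategies. That implication is exactly what the discounted-limit argument in the paper supplies, and without a substitute for it your proof is incomplete.
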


\begin{proof}
For every $\lambda \in (0,1]$
denote by $v_{i,\lambda}:= \min_{\sigma_{-i}\in \Sigma_{-i}}\max_{\sigma_i\in \Sigma_i}\gamma_{i,\lambda}(\sigma)$ player~$i$'s minmax value in the $\lambda$-discounted game, 
where $\gamma_{i,\lambda}(\sigma):=\E_\sigma\left[(1-\lambda)^\theta r_i(a^\theta)\right]$. It is well known that there is a stationary profile $x_{-i,\lambda}\in \Xi_{-i}$ that achieves the minimum, and that  $v_i=\lim_{\lambda \to 0} v_{i,\lambda} \geq 0$, see \cite{neyman2003stochastic}. 
Any limit point $x_{-i,0}$ of the family $(x_{-i,\lambda})_{\lambda>0}$ as $\lambda\to 0$ is a stationary minmaxing profile against $i$. Indeed, let
 $x_i\in \Xi_i$ be arbitrary. If the probability of absorption under $(x_i,x_{-i,0}) $ is positive, then 
\[ \gamma_i(x_i,x_{-i,0})=\lim_{\lambda \to 0} \gamma_{i,\lambda}(x_i,x_{-i,\lambda})\leq 
\lim_{\lambda \to 0} v_{i,\lambda} = 
v_i. \]
If the profile $(x_i,x_{-i,0}) $ is nonabsorbing, then $\gamma_i(x_i,x_{-i,0})=0\leq v_i$.
\end{proof}
\medskip

As in much of the literature on dynamic games, our equilibrium constructions will be supported by the threat of switching to a minmaxing profile in case a player does not abide to a prespecified behavior.
\color{black}

\subsection{The Maximal Absorbing Response}

For every mixed action profile $x \in \Xi$, 
the expected absorption probability under $x$ is 
\[ p(x) := \sum_{a \in A}  \left(p(a) \prod_{j \in I} x_j(a_j)\right), \] 
and player~$i$'s expected absorption payoff under $x$ given absorption is
\[ r_i(x) := \sum_{a \in A}  \left(r_i(a) p(a) \prod_{j \in I} x_j(a_j)\right) / p(x), \ \ \ \forall i \in I. \]
The expected absorption payoff is defined only for absorbing mixed action profiles $x$.

Player~$i$'s \emph{payoff under a best absorbing response} to $x_{-i}$ is 
\begin{equation}
\label{equ:rho}
\rho_i(x) := \max \bigl\{ r_i(a_i,x_{-i}) \colon a_i \in A_i, p(a_i,x_{-i}) > 0\bigr\}.
\end{equation}
By convention, the maximum on an empty set is $-\infty$.
Since the absorbing payoffs are positive,
$\rho_i(x) > 0$ as soon as $\rho_i(x) \neq -\infty$.
Note that $\rho_i(x)$ is independent of $x_i$.
Note also that the functions $x \mapsto \rho_i(x)$ are lower semicontinuous:
$\liminf_{k\to \infty} \rho_i(x^{(k)}) \geq \rho_i(x)$ for
every sequence $(x^{(k)})_{k \in \dN}$ that converges to $x$.
This is so because 
when $(x^{(k)})_{k \in \dN}$ is a sequence in $\Xi$ that converges to $x$,
if $p(a_i,x_{-i}) > 0$,
then $p(a_i,x^{(k)}_{-i}) > 0$ for every $k \in \dN$ sufficiently large,
and hence the maximum in Eq.~\eqref{equ:rho} contains fewer 
actions for $x$ than for $x^{(k)}$, for every large $k$.

When $\rho_i(x) > -\infty$, let $a_i(x) \in A_i$ be
a \emph{best absorbing response} to $x_{-i}$,
namely,
an action that attains the maximum in Eq.~\eqref{equ:rho}.

We note that if there is a nonabsorbing mixed action profile such that $\rho_i(x) = -\infty$,
then $v_i = 0$.
Indeed, since the game is positive and recursive, $v_i \geq 0$, 
and by playing the mixed action profile $x_{-i}$,
the other players lower the best payoff player~$i$ can obtain to $0$.

%
%
%
%
%

If there is $x \in X$ such that 
$\rho_i(x) = -\infty$ for every $i \in I$,
then the stationary strategy $x$ is an undiscounted $0$-equilibrium:
it yields each player the payoff 0,
and all unilateral deviations are nonabsorbing
and hence not profitable.
Since our goal is to prove the existence of an undiscounted equilibrium payoff,
we can make the following assumption.

\begin{assumption}[No nonabsorbing equilibrium]
\label{assumption:rho}
For every $x \in X$ there is $i \in I$ such that $\rho_i(x) > 0$.
\end{assumption}


We note%
\footnote{
Assumption~\ref{assumption:rho} and the inequality $\rho_i(x) \geq v_i$ whenever the left-hand side is finite are two properties that are used in the proof and require
the assumption that the game is recursive and positive.}
that $\rho_i(x)\geq v_i$
whenever $\rho_i(x) > 0$.
It will be convenient to denote
\[ \rho(x) := (\rho_i(x))_{i \in I}, \ \ \ \forall x \in X. \]

\subsection{Partitions of the Space of Payoff Vectors}
\label{section:partition}

For every nonabsorbing mixed action profile $x \in X$ 
we partition $\dR^I$ into three subsets, $W(x)$, $W_L(x)$, and $W_H(x)$.
Define (see Figure~\arabic{figurecounter} for an illustration with two players):
\begin{align*}
W_H(x) &:= \{ w \in \dR^I \colon w_i > \rho_i(x), \ \ \ \forall i \in I\},\\
W(x) &:=  \left\{ w \in \dR^I \colon 
\begin{array}{ll}
w_i \geq \rho_i(x) & \forall i \in I,\\
w_i =  \rho_i(x), & \hbox{for some } i \in I
\end{array}
\right\},\\
W_L(x) &:= \{ w \in \dR^I \colon 
w_i < \rho_i(x), \ \ \ \hbox{for some } i \in I\}.
\end{align*}



\centerline{
\begin{tikzpicture}[scale=0.3]
  \fill[gray!30] (9,9) -- (-9,9) -- (-9,-9) -- (9,-9) -- (9,9);

  \draw[dotted] (-10,-2) -- (10.5,-2);
  \draw[dotted] (-2,-10) -- (-2,10.5);
  \draw[->] (9,-2) -- (10.5,-2) node[right] {$w_1$};
  \draw[->] (-2,9) -- (-2,10.5) node[above] {$w_2$};
  \draw[->] (13,0) -- (9.2,-1.6);

  \draw[line width=2pt] (-2,9) -- (-2,-2) -- (9,-2);



  \node at (-5.5, -5.5) {$W_L(x)$};
  \node at (4, 4) {$W_H(x)$};
  \node at (14.5, 0) {$W(x)$};

  \node[below] at (-2,-10.5) {$\rho_1(x)$};
  \node[left] at (-10.5,-2) {$\rho_2(x)$};
\end{tikzpicture}
}

\centerline{Figure~\arabic{figurecounter}: The sets $W(x)$, $W_L(x)$, and $W_H(x)$.}
\addtocounter{figurecounter}{1}

\bigskip

Set
\begin{align*}
W_H := \bigcup_{x \in X} W_H(x), \ \ \ 
W := \bigcup_{x \in X} W(x), \ \ \ 
W_L := \bigcup_{x \in X} W_L(x).
\end{align*} 

We note that the three sets $W$, $W_H \setminus W$,
and $\bigcap_{x \in X} W_L(x)$ form a partition of
$\dR^I$.
Indeed, by De Morgan's laws,
\begin{align*}
\bigcap_{x \in X} W_L(x)
&= \bigcap_{x \in X} (\dR^I \setminus (W_H(x) \cup W(x)))\\
&= \dR^I \setminus \left(\bigcup_{x \in X} \bigl(W_H(x) \cup W(x)\bigr)\right)\\
&= \dR^I \setminus (W_H \cup W).
\end{align*}

\subsection{Exits}

We here present and study the notions of exits and joint exits,
which were introduced by \cite{Solan1999ThreePlayerAbsorbing}.

\begin{definition}[Exit, joint exit]
Let $x \in X$ be a nonabsorbing mixed action profile.
A pair $(J,a_J)$ where $\emptyset \neq J \subseteq I$ and $a_J \in \prod_{i \in J} A_i$ is an \emph{exit at $x$} if 
\begin{itemize}
\item $p(a_J,x_{-J}) > 0$.
\item $p(a_{J'},x_{-J'}) = 0$ for every $J' \subset J$.
\end{itemize}
An exit $(J,a_J)$ is 
\emph{joint} if $|J| \geq 2$.
The set of 
joint exits from $x$ is
denoted 
$\calE_J(x)$.
\end{definition}

For future reference we note that if a sequence of nonabsorbing mixed action profiles that have joint exits converges to a limit,
then the limit also has a joint exit.

\begin{lemma}
\label{lemma:joint:continuous}
If $(x^{(k)})_{k \in \dN}$ is a sequence of nonabsorbing mixed actions that converges to $x$,
and if $\calE_J(x^{(k)}) \neq \emptyset$,
then $\calE_J(x) \neq \emptyset$.
\end{lemma}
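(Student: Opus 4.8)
The plan is to exploit the finiteness of both the action set $A$ and the "discrete data" of an exit. The key observation is that whether a pair $(J,a_J)$ is an exit at $x$ depends on $x$ only through which coordinates of $x$ are strictly positive, i.e.\ through $\supp(x)$, and in fact only through whether certain mixed-action profiles $(a_{J'},x_{-J'})$ are absorbing. Since $\supp(x^{(k)})$ ranges over the finite collection of subsets of $A$, by passing to a subsequence I may assume $\supp(x^{(k)})$ is constant, equal to some fixed product set $S = \prod_i S_i \subseteq B^\ell$. Then all the $x^{(k)}$ lie in the same connected component, and there is a single joint exit $(J,a_J)$ that works for infinitely many (hence, after a further subsequence, all) of the $x^{(k)}$, because $\calE_J(x^{(k)})$ is a nonempty subset of the finite set of pairs $(J,a_J)$ with $|J|\ge 2$.

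First I would fix such a $(J,a_J)$ with $(J,a_J)\in\calE_J(x^{(k)})$ for all $k$. Next I would show $(J,a_J)$ is an exit at the limit $x$. The first condition, $p(a_J,x_{-J})>0$: since $p$ is continuous (it is multilinear in the mixed actions) and $p(a_J,x^{(k)}_{-J})>0$ for all $k$, we get $p(a_J,x_{-J})\ge 0$ in the limit, which is not quite enough by itself. Here is where I use that $\supp(x^{(k)})$ is constant: write $p(a_J,x_{-J}^{(k)}) = \sum_{b_{-J}} p(a_J,b_{-J})\prod_{j\notin J} x_j^{(k)}(b_j)$; the terms that are strictly positive are exactly those with $b_{-J}\in\prod_{j\notin J}S_j$ and $p(a_J,b_{-J})>0$, and this index set does not depend on $k$. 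Since $x$ is nonabsorbing with $\supp(x)\subseteq B^\ell$ and $x^{(k)}\to x$, each coordinate $x_j(b_j)$ for $b_j\in S_j$ — wait, that need not hold, as the limit may have smaller support. So instead I argue as follows: because $(J,a_J)$ is an exit at $x^{(k)}$, in particular $p(a_{J'},x^{(k)}_{-J'})=0$ for every proper nonempty $J'\subset J$; by continuity the same holds at $x$. And since $x\in X$, there is some action profile $a\in B^\ell$ with $a_j\in\supp(x_j)$ for all $j$; consider whether $p(a_J, (x_j)_{j\notin J})$ can be zero. If it were, then together with $p(a_{J'},x_{-J'})=0$ for all $\emptyset\ne J'\subsetneq J$ and $p(x)=0$, one checks that a suitable single-player or smaller-coalition exit is forced to exist at each $x^{(k)}$ contradicting minimality — this is the delicate bookkeeping step.

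Actually, the cleaner route, and the one I would write up, is purely combinatorial and avoids limits of supports almost entirely: observe that being an exit is a property of $(J,a_J)$ together with the \emph{collection of supports} $(\supp(x_j))_{j\in I}$, and whether $p(a_{J'},x_{-J'})>0$ depends only on whether there exists $b_{-J'}\in\prod_{j\notin J'}\supp(x_j)$ with $p(a_{J'},b_{-J'})>0$. So: pass to a subsequence along which $\supp(x^{(k)}_j)$ is constant in $k$ for every $j$, say equal to $S_j$. Since $x^{(k)}\to x$, we have $\supp(x_j)\subseteq S_j$ for every $j$ (support can only shrink in the limit). Now pick, along a further subsequence, a single $(J,a_J)\in\calE_J(x^{(k)})$ valid for all $k$. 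I claim $(J,a_J)$ (or, if it has degenerated, a joint sub-exit of it) is a joint exit at $x$: the condition $p(a_{J'},x_{-J'})=0$ for $J'\subsetneq J$ is inherited from $x^{(k)}$ by continuity; for the positivity condition, there must be \emph{some} coalition exit at $x$ "inside" $a_J$ because $p(a_J, \cdot)$ against the limit support is... — and this is exactly where the argument needs care, since the limit support $\supp(x_{-J})$ may be strictly smaller than $S_{-J}$, possibly killing the absorbing term. The main obstacle, then, is ruling out that passing to the limit destroys \emph{all} absorbing responses associated with $a_J$, i.e.\ showing the limit still has \emph{a} joint exit even if not the specific $(J,a_J)$; I expect the resolution to use Assumption~\ref{assumption:rho} or the structural fact that $x\in X$ forces $\supp(x)\subseteq B^\ell$ with $B^\ell$ a connected component, so that $x$ cannot be "too absorbing-adjacent" in a way incompatible with the $x^{(k)}$'s. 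Once that lemma on the limit support is nailed down, the rest is routine continuity and finiteness.
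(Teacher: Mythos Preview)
The paper omits the proof of this lemma, so there is no authorial argument to compare against directly. Your proposal, however, has a genuine gap that you yourself flag but do not close: when the support of $x$ is strictly smaller than that of the $x^{(k)}$, the fixed joint exit $(J,a_J)$ may satisfy $p(a_J,x_{-J})=0$, and you never construct a replacement. Your instinct to look for a ``joint sub-exit'' with a smaller coalition $J'\subset J$ points the wrong way: since $p(a_{J'},x^{(k)}_{-J'})=0$ for every proper $J'\subset J$ and supports only shrink in the limit, no such sub-exit can arise. Neither Assumption~\ref{assumption:rho} nor the connected-component structure of $B^\ell$ is relevant here, so the resolution you anticipate does not materialize.

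The missing idea is to \emph{enlarge} the coalition. After passing to a subsequence with constant support $S=\prod_i S_i$ and a fixed joint exit $(J,a_J)$, pick a pure $b_{-J}\in S_{-J}$ with $p(a_J,b_{-J})>0$, set $a^*:=(a_J,b_{-J})$, and let $J^*:=\{i\in I: a^*_i\notin \supp(x_i)\}$. One has $J\subseteq J^*$ because minimality of $(J,a_J)$ at $x^{(k)}$ forces $a_i\notin S_i\supseteq\supp(x_i)$ for every $i\in J$. Since $a^*_{-J^*}\in\prod_{j\notin J^*}\supp(x_j)$ and $p(a^*)>0$, we get $p(a^*_{J^*},x_{-J^*})>0$; now take $J^{**}\subseteq J^*$ minimal with this property, so that $(J^{**},a^*_{J^{**}})$ is an exit at $x$. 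If $J^{**}=\emptyset$ then $p(x)>0$, impossible. If $J^{**}=\{i\}$ with $i\in J$, then $p(a_i,x_{-i})>0$ gives (via $\supp(x_{-i})\subseteq S_{-i}$) $p(a_i,x^{(k)}_{-i})>0$, contradicting minimality of $(J,a_J)$ at $x^{(k)}$. If $J^{**}=\{i\}$ with $i\in J^*\setminus J$, then $a^*_i=b_i\in S_i$ and $p(b_i,x_{-i})>0$ produces an absorbing pure profile inside $S=\supp(x^{(k)})$, contradicting $p(x^{(k)})=0$. Hence $|J^{**}|\ge 2$ and $\calE_J(x)\neq\emptyset$.
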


The following observation relates joint exits to rectangular connected components. We omit the proof as well.

\begin{lemma}
\label{lemma:joint:rectangular}
The connected component $B^\ell$ is rectangular
if and only if there is no joint exit at any mixed action profile in $X^\ell$.
\end{lemma}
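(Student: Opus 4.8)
The plan is to prove the two implications of the equivalence separately, using only the adjacency relation defining $B^\ell$ together with the definition of a (joint) exit. Throughout, write $B^\ell_i := \{a_i \in A_i : a_i \text{ appears in some profile of } B^\ell\}$ for the projection of $B^\ell$ onto coordinate $i$; one always has $B^\ell \subseteq \prod_{i \in I} B^\ell_i$, and, $B^\ell$ being nonempty, it is rectangular precisely when this inclusion is an equality.

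\textbf{Rectangular implies no joint exit.} I argue by contraposition. Suppose $(J, a_J)$ is a joint exit at some $x \in X^\ell$, so $|J| \ge 2$. Since $p(a_J, x_{-J}) > 0$, there is a pure profile $a_{-J}$ with $a_k \in \supp(x_k)$ for all $k \notin J$ and with $(a_J, a_{-J})$ absorbing. Pick any pure profile $\bar x \in \supp(x) = \prod_k \supp(x_k) \subseteq B^\ell$. For each $i \in J$, applying the minimality clause of the exit to the proper subset $\{i\} \subsetneq J$ gives $p(a_i, x_{-i}) = 0$; in particular $(a_i, \bar x_{-i})$ is nonabsorbing, and since it differs from $\bar x \in B^\ell$ in coordinate $i$ only, it lies in $B^\ell$, whence $a_i \in B^\ell_i$. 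As $a_k \in \supp(x_k) \subseteq B^\ell_k$ for $k \notin J$, the profile $(a_J, a_{-J})$ belongs to $\prod_i B^\ell_i$; but it is absorbing, hence not in $B^\ell$. Thus $B^\ell \subsetneq \prod_i B^\ell_i$ and $B^\ell$ is not rectangular.

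\textbf{No joint exit implies rectangular.} The crux is the following recombination property: \emph{if there is no joint exit at any $x \in X^\ell$, then $(b'_{-i}, b_i) \in B^\ell$ for all $b, b' \in B^\ell$ and all $i \in I$.} To prove it, fix $i$ and $b$, choose a path $b = g^0, g^1, \dots, g^m = b'$ inside $B^\ell$ (it exists because $B^\ell$ is a connected component, so every $g^t$ lies in $B^\ell$), and show $(g^t_{-i}, b_i) \in B^\ell$ by induction on $t$. For $t = 0$ this is $(b_{-i}, b_i) = b$. For the step, $g^{t+1}$ differs from $g^t$ in exactly one coordinate $j$. If $j = i$, then $(g^{t+1}_{-i}, b_i) = (g^t_{-i}, b_i)$. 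If $j \ne i$, consider the pure profile $c$ obtained from $g^t$ by setting coordinate $i$ to $b_i$ and coordinate $j$ to $g^{t+1}_j$, and note that $c = (g^{t+1}_{-i}, b_i)$. Were $c$ absorbing, the pair $(\{i,j\}, (b_i, g^{t+1}_j))$ would be a joint exit at the pure profile $\delta_{g^t} \in X^\ell$: indeed $p(c) > 0$; $p(b_i, g^t_{-i}) = 0$ because $(g^t_{-i}, b_i) \in B^\ell$ by the inductive hypothesis; and $p(g^{t+1}_j, g^t_{-j}) = p(g^{t+1}) = 0$ because $g^{t+1} \in B^\ell$; this contradicts the hypothesis. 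Hence $c$ is nonabsorbing, and since it differs from $(g^t_{-i}, b_i) \in B^\ell$ in coordinate $j$ only, $c \in B^\ell$, completing the induction (and in particular $(b'_{-i}, b_i) = (g^m_{-i}, b_i) \in B^\ell$).

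With the recombination property in hand I conclude as follows. Enumerate the players $I = \{1, \dots, n\}$ and fix $d \in \prod_i B^\ell_i$; for each $i$ pick $e^i \in B^\ell$ with $e^i_i = d_i$. Put $g^{(1)} := e^1$ and, recursively, $g^{(t+1)} := \bigl((g^{(t)})_{-(t+1)}, d_{t+1}\bigr)$. Applying the recombination property with $b := e^{t+1}$, $b' := g^{(t)}$ and coordinate $t+1$ gives $g^{(t+1)} \in B^\ell$, and by construction $g^{(t+1)}$ agrees with $d$ in coordinates $1, \dots, t+1$. Therefore $g^{(n)} = d \in B^\ell$, so $\prod_i B^\ell_i \subseteq B^\ell$ and hence $B^\ell = \prod_i B^\ell_i$ is rectangular. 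The main obstacle is the recombination property: the key insight is that an absorbing recombinant $(g^{t+1}_{-i}, b_i)$ met while sliding along the path is witnessed exactly by the two-coordinate joint exit $(\{i,j\}, (b_i, g^{t+1}_j))$ at the nonabsorbing pure profile $g^t$, with the two single-coordinate non-exit conditions handed to us for free by the inductive hypothesis and by $g^{t+1} \in B^\ell$. Everything else is routine bookkeeping about supports, the projections $B^\ell_i$, and the edge relation on $B^\ell$.
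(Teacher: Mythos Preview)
Your proof is correct. The paper omits its own proof of this lemma (it writes ``We omit the proof as well''), so there is nothing to compare against; your argument supplies exactly the kind of elementary combinatorial verification the authors presumably had in mind, and both directions are handled cleanly.

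One very minor remark on presentation: in the first direction, when you write that $(a_i,\bar x_{-i})$ ``differs from $\bar x$ in coordinate $i$ only'', you are implicitly using that the graph on $B$ joins profiles differing in \emph{at most} one coordinate, so the case $a_i=\bar x_i$ (which can occur, since nothing forces $a_i\notin\supp(x_i)$) is harmless --- the conclusion $a_i\in B^\ell_i$ is then immediate. You might make this explicit, but the logic is sound as written.
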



\begin{example}
\label{example:1}
Consider the positive recursive absorbing game in Figure~\arabic{figurecounter},
where Player~1 has $3$ actions, Player~2 has $4$ actions,
empty entries are nonabsorbing,
and each nonempty entry $a$ is absorbing with positive probability and contains the vector $r(a)$. 
The four pairs $(\{1,2\}, (a'_1,a'_2))$, $(\{1,2\}, (a'_1,a''_2))$, $(\{1,2\}, (a''_1,a'_2))$, $(\{1,2\}, (a''_1,a''_2))$
are joint exits from $(a_1,a_2)$.
The pair $(\{2\}, (a_1,a'''_2))$ is an exit of $(a_1,a_2)$ but not a joint exit.
There is no joint exit at $(a'_1,a_2)$.

\centerline{
\begin{tikzpicture}
  \def\boxheight{1}
  \def\boxlen{1.3}

  \foreach \row in {1,2,3} {
    \foreach \col in {1,2,3,4} {
      \draw[thick] (\col*\boxlen, -\row*\boxheight) rectangle ++(\boxlen, \boxheight);
    }
  }

  \node at (4.5*\boxlen, -.5*\boxheight) {$(\frac{1}{2},\frac{1}{4})$};
  \node at (4.5*\boxlen, -1.5*\boxheight) {$(\frac{1}{3},\frac{3}{5})$};
  \node at (4.5*\boxlen, -2.5*\boxheight) {$(\frac{2}{3},\frac{2}{3})$};
  \node at (2.5*\boxlen, -1.5*\boxheight) {$(\frac{1}{4},\frac{1}{3})$};
  \node at (3.5*\boxlen, -1.5*\boxheight) {$(\frac{1}{2},\frac{1}{2})$};
  \node at (2.5*\boxlen, -2.5*\boxheight) {$(\frac{1}{5},\frac{3}{4})$};
  \node at (3.5*\boxlen, -2.5*\boxheight) {$(\frac{2}{5},\frac{3}{5})$};


  \node[left] at (1*\boxlen, -.5*\boxheight) {$a_1$};
  \node[left] at (1*\boxlen, -1.5*\boxheight) {$a'_1$};
  \node[left] at (1*\boxlen, -2.5*\boxheight) {$a''_1$};

  \node[above] at (1.5*\boxlen, -0*\boxheight) {$a_2$};
  \node[above] at (2.5*\boxlen, -0*\boxheight) {$a'_2$};
  \node[above] at (3.5*\boxlen, -0*\boxheight) {$a''_2$};
  \node[above] at (4.5*\boxlen, -0*\boxheight) {$a'''_2$};
\end{tikzpicture}}

\centerline{Figure~\arabic{figurecounter}: The positive recursive absorbing game in Example~\ref{example:1}.}
\addtocounter{figurecounter}{1}
\end{example}

The following result states that at all discontinuity points of $x \mapsto \rho(x)$ there is a joint exit.

\begin{lemma}
\label{lemma:exit:rho}
If $\rho(\cdot)$ is not continuous at $x\in X$, then $\calE_J(x)\neq \emptyset$.
\end{lemma}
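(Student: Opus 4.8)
Here is how I would go about it; the lemma in fact holds for every positive recursive absorbing game and does not use non‑rectangularity.

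\medskip
\noindent\textbf{Proof proposal.}
Since $\rho=(\rho_i)_{i\in I}$, if $\rho$ is discontinuous at $x$ then some $\rho_i$ is discontinuous at $x$. I would start from a sequence $(x^{(k)})_k$ in $X$ with $x^{(k)}\to x$ and $\rho_i(x^{(k)})\not\to\rho_i(x)$. Using the lower semicontinuity of $\rho_i$, pass to a subsequence (still indexed by $k$) along which $\rho_i(x^{(k)})\in(0,1]$ and $\rho_i(x^{(k)})$ stays bounded away from $\rho_i(x)$ from above; in particular $\rho_i(x^{(k)})$ is attained by a best absorbing response $a_i(x^{(k)})$, and, shrinking the subsequence using finiteness of $A_i$, one may take $a_i(x^{(k)})=\bar a_i$ independent of $k$. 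Thus $p(\bar a_i,x^{(k)}_{-i})>0$ and $r_i(\bar a_i,x^{(k)}_{-i})=\rho_i(x^{(k)})$ for all $k$.

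The first point I would establish is that $p(\bar a_i,x_{-i})=0$. If instead $p(\bar a_i,x_{-i})>0$, then the map $y_{-i}\mapsto r_i(\bar a_i,y_{-i})$, being the ratio of a polynomial in $y_{-i}$ by $p(\bar a_i,y_{-i})$ whose denominator is continuous and stays bounded away from $0$ near $x_{-i}$, is continuous at $x_{-i}$; hence $\rho_i(x^{(k)})=r_i(\bar a_i,x^{(k)}_{-i})\to r_i(\bar a_i,x_{-i})\le\rho_i(x)$ (the inequality because $\bar a_i$ is then an admissible absorbing response at $x$), which together with lower semicontinuity yields $\rho_i(x^{(k)})\to\rho_i(x)$, contradicting the choice of the subsequence. (The case $\rho_i(x)=-\infty$ is subsumed, since $p(\bar a_i,x_{-i})>0$ would force $\rho_i(x)\ge r_i(\bar a_i,x_{-i})>0$.) Next, since $p(\bar a_i,x^{(k)}_{-i})>0$ there is $a^{(k)}_{-i}$ with $p(\bar a_i,a^{(k)}_{-i})>0$ and $a^{(k)}_j\in\supp(x^{(k)}_j)$ for all $j\ne i$; by finiteness of $A_{-i}$, take $a^{(k)}_{-i}=a^*_{-i}$ independent of $k$. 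Set $a^*:=(\bar a_i,a^*_{-i})$; then $p(a^*)>0$, and $p(\bar a_i,x_{-i})=0$ forces $a^*_{-i}\notin\supp(x_{-i})$.

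Now I would extract a joint exit at $x$ from $a^*$. Let $J:=\{j\in I\colon a^*_j\notin\supp(x_j)\}$; then $J\ne\emptyset$, $J\setminus\{i\}\ne\emptyset$, and $p(a^*_J,x_{-J})\ge p(a^*)\prod_{j\notin J}x_j(a^*_j)>0$. Choose $J^*\subseteq J$ minimal by inclusion with $p(a^*_{J^*},x_{-J^*})>0$; by minimality $p(a^*_{J'},x_{-J'})=0$ for every $J'\subsetneq J^*$, so $(J^*,a^*_{J^*})$ is an exit at $x$. It remains to show $|J^*|\ge 2$. Suppose $J^*=\{m\}$, so $a^*_m\notin\supp(x_m)$ and $p(a^*_m,x_{-m})>0$. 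If $m=i$, this contradicts $p(\bar a_i,x_{-i})=0$. If $m\ne i$, then $a^*_m\in\supp(x^{(k)}_m)$ for all $k$ by the choice of $a^*_{-i}$, while $p(a^*_m,x^{(k)}_{-m})\to p(a^*_m,x_{-m})>0$; restricting the sum defining $p(x^{(k)})$ to profiles whose $m$‑th coordinate is $a^*_m$ gives, for $k$ large,
\[ p(x^{(k)})\ \ge\ x^{(k)}_m(a^*_m)\cdot p(a^*_m,x^{(k)}_{-m})\ >\ 0, \]
contradicting $x^{(k)}\in X$. Hence $|J^*|\ge 2$, so $(J^*,a^*_{J^*})\in\calE_J(x)$ and $\calE_J(x)\ne\emptyset$.

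I expect the only genuinely delicate step to be $p(\bar a_i,x_{-i})=0$ — this is where the hypothesis that $\rho$ is discontinuous at $x$ is really used, via the observation that $r_i(\bar a_i,\cdot)$ becomes continuous as soon as its denominator is locally bounded below. The rest is bookkeeping with nested subsequences, plus the elementary inequality $p(x^{(k)})\ge x^{(k)}_m(a^*_m)\,p(a^*_m,x^{(k)}_{-m})$, which is what lets a hypothetical single‑player exit $\{m\}$ collide with the fact that each $x^{(k)}$ is nonabsorbing.
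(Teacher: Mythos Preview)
Your proof is correct and follows essentially the same route as the paper's: both arguments hinge on locating an action $\bar a_i$ that is absorbing against $x^{(k)}_{-i}$ but not against $x_{-i}$, and then extracting a joint exit from the resulting pure profile. The paper phrases this as a contrapositive (no joint exit implies the sets $\widehat A_i(x)$ and $\widehat A_i(x^{(k)})$ coincide, hence continuity), while you argue directly, but the combinatorial core is identical.

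Your write-up is in fact more complete than the paper's on one point: the paper simply asserts that $(J\cup\{i\},(a_i,\widehat a_J))$ is a joint exit without checking minimality or ruling out that a minimal sub-exit could be a singleton. You carry out that verification explicitly, and your argument for the case $J^*=\{m\}$ with $m\neq i$ --- using $a^*_m\in\supp(x^{(k)}_m)$ to force $p(x^{(k)})>0$ --- is exactly the missing piece. One cosmetic remark: in the step where you derive the contradiction from $p(\bar a_i,x_{-i})>0$, the appeal to lower semicontinuity is unnecessary, since along your subsequence you already have $\rho_i(x^{(k)})\geq \rho_i(x)+c$ while $\rho_i(x^{(k)})\to r_i(\bar a_i,x_{-i})\leq \rho_i(x)$, which is a direct contradiction.
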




\begin{proof}
Suppose that there is no joint exit at $x$.
We will show that $\rho_i(x) = \lim_{k \to \infty} \rho_i(x^{(k)})$ for every $i \in I$.

For every $y \in X$ denote the set of all absorbing responses of player~$i$ to $y_{-i}$ by
\[ \widehat A_i(y) := \{a_i \in A_i \colon p(a_i,y_{-i}) > 0\}. \]
Assume w.l.o.g.~that $\supp(x^{(k)})$ is independent of $k$,
and therefore $\widehat A_i(x^{(k)})$ is independent of $k$.
In this proof we will denote this set simply by $\widehat A_i(x^{(k)})$.

Fix $i \in I$.
To prove that $\rho_i(x) = \lim_{k \to \infty} \rho_i(x^{(k)})$ it is sufficient to show that 
$\widehat A_i(x) = \widehat A_i(x^{(k)})$.

We note that the inequality $\widehat A_i(x) \subseteq \widehat A_i(x^{(k)})$ holds because $\supp(x^{(k)}) \supseteq \supp(x)$ for all $k \in \dN$.
We will next prove the reverse inclusion.

We argue by contradiction and assume that there is $a_i\in  \widehat A_i(x^{(k)})\setminus \widehat A_i(x)$. Since 
$p(a_i,x_{-i}^{(k)}) > 0$ for all $k \in \dN$,
while $p(a_i,x_{-i}) = 0$,
there is an action profile $\widehat a_{-i} \in \supp(x^{(k)}_{-i})$ for all $k \in \dN$,
such that $p(a_i,\widehat a_{-i}) > 0$.
Denoting by $J \subseteq I \setminus \{i\}$ the set of all players $j$ such that $\widehat a_j \not\in \supp(x_j)$,
we obtain that $|J| \geq 1$ and $(J \cup \{i\}, (a_i,\widehat a_J))$ is a joint exit at $x$.
%
\end{proof}

%
%
%
%


\bigskip




The next result relates joint exits to the set $W_H(x)$:
it states that if a payoff vector $w$ lies in $W_H \setminus W$,
then there is $x \in X$ such that not only does $w$ lie in $W_H(x)$, but also $x$ has a joint exit.

\begin{lemma}
\label{lemma:WH}
If $w \in W_H \setminus W$, 
then there are $\ell \in [L]$ and $x \in X^\ell$ such that $w \in W_H(x)$ and $\calE_J(x) \neq \emptyset$.
\end{lemma}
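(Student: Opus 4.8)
The plan is to start from an arbitrary $x \in X$ with $w \in W_H(x)$ (which exists since $w \in W_H$), and to ``push'' $x$ toward a mixed-action profile that still satisfies $w \in W_H(\cdot)$ but additionally carries a joint exit, using the hypothesis $w \notin W$ to prevent getting stuck. Here is the structure I would follow. Pick $x \in X$, say $x \in X^\ell$, with $w_i > \rho_i(x)$ for all $i$. If $\calE_J(x) \neq \emptyset$ we are done, so assume $\calE_J(x) = \emptyset$, which by Lemma~\ref{lemma:joint:rectangular} forces the component $B^\ell$ to be rectangular — but wait, the theorem we are ultimately proving has no rectangular components, yet this lemma is stated for a general positive recursive absorbing game, so I cannot invoke that. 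Instead I will work directly: among all components and all profiles $x \in X$ with $w \in W_H(x)$, the set of such $x$ is relatively open in $X$ (since $w_i > \rho_i(x)$ and $\rho_i$ is lower semicontinuous, the condition $w_i > \rho_i(x)$ is an open condition on each $X^\ell$, hence on $X$).

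First I would identify, within each $X^\ell$ that meets $W_H$-membership, a profile $x^*$ that is ``maximal'' for a suitable notion. Concretely: consider the closure in $\Xi$ of the relatively open set $\{x \in X^\ell : w \in W_H(x)\}$ and take a boundary point $x^*$ of it inside $X^\ell$ — the point is that as we move $x$ to the topological boundary of the region where $w \in W_H(x)$ (staying in $X^\ell$), either we hit a profile with strictly larger support (a discontinuity of $\rho$), or $\rho_i(x^*) = w_i$ for some $i$. The second alternative would put $w$ in $W(x^*)$, contradicting $w \notin W$ (note $W = \bigcup_{x} W(x)$, and membership in $W(x^*)$ requires $w_i \geq \rho_i(x^*)$ for all $i$ together with equality for some $i$; I would need to check the inequalities for the other coordinates persist in the limit, which follows from lower semicontinuity of $\rho$ — if $w_j < \rho_j(x^*)$ for some $j$ then by lower semicontinuity $w_j < \rho_j(x)$ for nearby $x$, so $x$ near $x^*$ would not satisfy $w \in W_H(x)$, consistent, but to derive the $W(x^*)$ contradiction I need all $w_j \geq \rho_j(x^*)$, which again holds by taking limits along a sequence $x^{(k)} \to x^*$ with $w \in W_H(x^{(k)})$ and lower semicontinuity: $\rho_j(x^*) \leq \liminf \rho_j(x^{(k)}) \leq w_j$). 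Hence the first alternative must occur: $\rho$ is discontinuous at $x^*$, and Lemma~\ref{lemma:exit:rho} gives $\calE_J(x^*) \neq \emptyset$. It remains to ensure $w \in W_H(x^*)$, i.e. $w_i > \rho_i(x^*)$ strictly for all $i$; combined with the limit inequality $\rho_i(x^*) \leq w_i$, the danger is equality. So the construction must avoid boundary points where equality occurs — which is exactly what $w \notin W$ buys us.

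The cleanest way to make this precise: let $U := \{x \in X : w \in W_H(x)\}$, a nonempty relatively open subset of $X$. I claim $U$ is not relatively closed in $X$. Indeed if it were, then since $X = \bigcup_\ell X^\ell$ with each $X^\ell$ connected (as noted in Section on mixed-action profiles), $U$ would be a union of whole components $X^\ell$; pick such a component $X^{\ell_0} \subseteq U$. Then for every $x \in X^{\ell_0}$ we have $w_i > \rho_i(x)$ for all $i$. Now move to the boundary of $X^{\ell_0}$ in $\Xi$: take $x^{(k)} \in X^{\ell_0}$ converging to some $\bar x$ on the relative boundary, where $\supp(\bar x) \subsetneq \supp(x^{(k)})$; such a limit still lies in $X$ (its support is contained in $B^{\ell_0}$, and $\bar x$ is nonabsorbing), but actually I should instead argue that $\rho$ restricted to $X^{\ell_0}$ must be discontinuous somewhere — because if $\rho$ were continuous on all of $X^{\ell_0}$ then by Lemma~\ref{lemma:exit:rho} there would be no joint exit anywhere on $X^{\ell_0}$, making $B^{\ell_0}$ rectangular by Lemma~\ref{lemma:joint:rectangular}. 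That is not yet a contradiction, so instead I use: since $B^{\ell_0}$ is a connected component, there is a profile $x$ supported on $B^{\ell_0}$ with small support, and walking along a path in $X^{\ell_0}$ to such $x$, the value $\rho_i(x)$ can only rise as support shrinks (fewer absorbing actions available — here I use lower semicontinuity/the support monotonicity argument from the proof of Lemma~\ref{lemma:exit:rho}); pushing to the extreme where $x$ is a pure vertex $\widehat a \in B^{\ell_0}$, we get $\rho_i(\widehat a) \geq \rho_i(x)$, and by Assumption~\ref{assumption:rho} some coordinate $\rho_i(\widehat a) > 0$, but this still need not contradict $w_i > \rho_i(\widehat a)$.

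The honest obstacle, then, is the last paragraph: I do not see how to finish using only a connectedness/closedness argument, so I expect the real proof must instead argue more carefully, perhaps by a minimal-support or maximal-support extremal choice of $x$ together with Lemma~\ref{lemma:exit:rho}, or by directly exhibiting a joint exit from the structure of the component. The key steps I am confident about are: (1) $U = \{x : w \in W_H(x)\}$ is nonempty and relatively open in $X$, by lower semicontinuity of $\rho$; (2) taking a limit $x^*$ of a sequence in $U$ yields $\rho_i(x^*) \leq w_i$ for all $i$; (3) since $w \notin W$, no such limit can have $\rho_i(x^*) = w_i$ for any $i$ while also having $\rho_j(x^*) \leq w_j$ for all $j$ — so the inequalities stay strict, i.e. $x^* \in U$ again, meaning $U$ is relatively closed, hence a union of components $X^\ell$; (4) on such a component, use Lemmas~\ref{lemma:exit:rho} and~\ref{lemma:joint:rectangular} to argue $\rho$ is discontinuous somewhere (because the component, being a genuine connected component of $B$ rather than an isolated product, must contain a joint exit — this is where I would need to invoke that no rectangular component exists, which is legitimate in the context of the eventual application even if the lemma as stated is more general). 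The main obstacle is bridging step (3)'s ``$U$ is a union of components'' to step (4)'s ``find a discontinuity of $\rho$'': I would resolve it by noting that if $X^{\ell}\subseteq U$ and $\rho$ were continuous throughout $X^\ell$, then by Lemma~\ref{lemma:exit:rho} there is no joint exit in $X^\ell$, hence $B^\ell$ is rectangular by Lemma~\ref{lemma:joint:rectangular} — contradicting the standing hypothesis — so $\rho$ is discontinuous at some $x^* \in X^\ell \subseteq U$, and Lemma~\ref{lemma:exit:rho} delivers $\calE_J(x^*)\neq\emptyset$ with $w \in W_H(x^*)$, as required.
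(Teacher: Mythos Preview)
Your proposal contains a genuine error that breaks the argument: the claim that $U = \{x \in X : w \in W_H(x)\}$ is relatively open in $X$ is false. Lower semicontinuity of $\rho_i$ makes the \emph{super}level sets $\{x : \rho_i(x) > c\}$ open, not the sublevel sets $\{x : \rho_i(x) < c\}$; thus the condition ``$\rho_i(x) < w_i$ for all $i$'' is in general \emph{not} an open condition. Concretely, if $x$ has small support and a nearby $x'$ has strictly larger support, then $\widehat A_i(x') \supseteq \widehat A_i(x)$ may be strict, causing $\rho_i$ to jump \emph{up} as one moves from $x$ to $x'$ --- exactly the scenario in which $x \in U$ but $x' \notin U$. (Your step~(3), showing $U$ is relatively \emph{closed} via lower semicontinuity together with $w \notin W$, is correct; it is the open-and-closed connectedness conclusion that fails.) A second, smaller issue: in step~(4) you invoke Lemma~\ref{lemma:exit:rho} in the wrong direction. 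That lemma says ``$\rho$ discontinuous $\Rightarrow$ joint exit''; its contrapositive is ``no joint exit $\Rightarrow$ $\rho$ continuous'', not ``$\rho$ continuous $\Rightarrow$ no joint exit''.

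The paper's proof supplies precisely the missing idea. Rather than asserting openness of $U$, it takes a continuous \emph{path} $(x_t)_{t\in[0,1]}$ in $X^\ell$ from $x' \in U$ to some $x''$ with $\calE_J(x'') \neq \emptyset$ (such $x''$ exists by non-rectangularity and Lemma~\ref{lemma:joint:rectangular}), arranged via Lemma~\ref{lemma:joint:continuous} so that $x''$ is the \emph{first} point on the path with a joint exit. Then Lemma~\ref{lemma:exit:rho} (used in the correct direction) gives that $\rho$ is continuous on $[0,1)$; combined with $w \notin W$, an intermediate-value argument keeps $w \in W_H(x_t)$ for all $t<1$, and your own step~(3) handles $t=1$. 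Note, incidentally, that your approach can be repaired without the path argument: if $U\cap X^\ell$ fails to be open at some $x\in U$, then along an approaching sequence $x^{(k)}\notin U$ one has $\limsup_k\rho_j(x^{(k)})\geq w_j>\rho_j(x)$ for some $j$, so $\rho$ is discontinuous at $x$ and Lemma~\ref{lemma:exit:rho} gives a joint exit at $x\in U$ directly; otherwise $U\cap X^\ell$ is open and closed, hence all of $X^\ell$, and non-rectangularity finishes as you intended.
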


\begin{proof}
Since $w \in W_H \setminus W$,
there are $\ell \in [L]$ and $x' \in X^\ell$ such that $w \in  W_H(x') \black \setminus W$.
If $\calE_J(x') \neq \emptyset$, we are done.
Assume then that $\calE_J(x') = \emptyset$.

Since $B^\ell$ is not rectangular,
there is a mixed-action profile in $X^\ell$ at which there is a joint exit.
Since $X^\ell$ is connected, 
there is a continuous path $(x_t)_{t \in [0,1]}$ in $X^\ell$ that starts at $x'$ and ends at some $x'' \in X^\ell$ such that $\calE_J(x'') \neq \emptyset$.
By Lemma~\ref{lemma:joint:continuous} we can furthermore assume that $x''$ is the \emph{only} point along the path where $\calE_J(x_t)$ is nonempty.

By Lemma~\ref{lemma:exit:rho},
the function $\rho$ is continuous along the path,
except, possibly at its end.
Since $w \in W_H(x')$,
it follows that $w_i > \rho_i(x_0)$ for all $i \in I$.
Since $w \not\in W(x_t)$ for all $t \in [0,1)$ and since $t \mapsto \rho(x_t)$ is continuous on $[0,1)$,
it follows that $w_i > \rho_i(x_t)$ for all $t \in [0,1)$.
Since $\rho_i$ is lower semicontinuous,
this inequality holds also at $t=1$.
This implies that $w \in W_H(x'')$,
which is what we want to show.
\end{proof}

\bigskip

The following result,
which is a special case of Lemma~5.3 in \cite{Solan1999ThreePlayerAbsorbing},
asserts that if there exists a joint exit that yields all players a high payoff,
then an undiscounted equilibrium payoff exists.

\begin{lemma}[\cite{Solan1999ThreePlayerAbsorbing}, Lemma 5.3]
\label{lemma:multi}
Suppose there exists a nonabsorbing mixed action profile $x \in X$ and a joint exit $(J,a_J) \in \calE_J(x)$
such that 
$r_i(a_J,x_{-J}) \geq \rho_i(x)$ for each $i \in I$.
Then $r(a_J,x_{-J})$ is an undiscounted equilibrium payoff.
\end{lemma}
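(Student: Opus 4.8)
The statement to prove is Lemma~\ref{lemma:multi}: if there is a nonabsorbing $x \in X$ and a joint exit $(J,a_J) \in \calE_J(x)$ with $r_i(a_J,x_{-J}) \geq \rho_i(x)$ for every $i \in I$, then $z := r(a_J,x_{-J})$ is an undiscounted equilibrium payoff. The idea is to build, for each $\ep>0$, an explicit $\ep$-equilibrium whose payoff is $z$, using $x$ as a "statistical" nonabsorbing profile and letting the players of $J$ perturb toward $a_J$ with small probability, while any unilateral deviation is deterred by the minmax threat.

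\textbf{Construction of the strategy profile.} Fix a small $\delta>0$. On the equilibrium path, at every stage the players play a perturbed profile $x^\delta$: players outside $J$ play $x_{-J}$, and each player $j \in J$ plays $(1-\delta)x_j$ together with weight $\delta$ on the action $a_j$. Because $(J,a_J)$ is an exit, the per-stage absorption probability under $x^\delta$ is of order $\delta^{|J|}$ (the leading term coming from all players in $J$ simultaneously switching to $a_j$, since every proper subset is nonabsorbing), and conditional on absorption at this leading order the payoff is exactly $r(a_J,x_{-J}) = z$; lower-order absorption events (not all of $J$ exiting) contribute $O(\delta^{|J|+1}/\delta^{|J|}) = O(\delta)$ to the conditional payoff. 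Hence the expected payoff of the profile is within $O(\delta)$ of $z$, and absorption occurs with probability $1$ in finite (random) time. Since payoffs are positive and absorption is certain, $\gamma(\sigma^\delta) \to z$ as $\delta \to 0$.

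\textbf{Deterring deviations.} Add the threat: if some player $i$ is detected deviating from $x^\delta$, all other players switch to a stationary minmaxing profile against $i$ (which exists by the Lemma on stationary minmaxing profiles). Detection is statistical — after a deviation, the realized frequency of $i$'s actions will, with high probability, eventually reveal the deviation — so one uses a standard statistical-test construction (as in Solan's three-player paper): players monitor empirical frequencies over blocks and punish if a test fails. The key inequality making this work is $\rho_i(x) \geq v_i$ whenever $\rho_i(x) > 0$ (noted in the excerpt), together with the hypothesis $z_i = r_i(a_J,x_{-J}) \geq \rho_i(x)$. Thus a deviator $i$ can hope for at most $\rho_i(x)$ by playing an absorbing action (against the roughly-$x_{-i}$ behavior of the others), or $v_i \leq \rho_i(x)$ after being punished, or $0$ by never absorbing — all at most $z_i$ up to $\ep$. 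One must check that a deviation by a player in $J$ that increases the probability of a "wrong" exit is also unprofitable: such an exit gives $i$ at most $\rho_i(x) \le z_i$, so it cannot help, while it is detectable if it shifts $i$'s action frequencies.

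\textbf{Main obstacle.} The routine parts are the asymptotics of $p(x^\delta)$ and the conditional-payoff computation. The delicate part is the detection/punishment argument: since the game absorbs quickly (after $O(\delta^{-|J|})$ stages), there may not be "enough time" for a statistical test to detect a profitable deviation before absorption, so one must argue that any deviation profitable enough to matter is either (i) statistically detectable with probability bounded away from $0$ within the relevant horizon, or (ii) one that raises the absorption probability but only toward exits $i$ already (weakly) prefers less than $z_i$. This is exactly the content of Lemma~5.3 of \cite{Solan1999ThreePlayerAbsorbing}, so I would invoke that machinery rather than reprove it; the work here is to verify that the hypotheses of that lemma — a joint exit dominating $\rho(x)$ coordinatewise, in a positive recursive absorbing game — are met, which they are by assumption.
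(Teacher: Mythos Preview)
Your proposal is correct and follows essentially the same approach as the paper: both construct the perturbed profile in which players outside $J$ play $x_{-J}$ while each $j\in J$ plays $(1-\beta)x_j+\beta a_j$, argue that absorption then occurs with payoff $r(a_J,x_{-J})$, deter absorbing deviations via the hypothesis $r_i(a_J,x_{-J})\geq \rho_i(x)$, and handle the remaining frequency-shifting deviations by standard statistical tests, ultimately deferring to Lemma~5.3 of \cite{Solan1999ThreePlayerAbsorbing}. One minor remark: by minimality of the exit $(J,a_J)$ there are in fact no ``lower-order absorption events'' under the perturbed profile (absorption requires every $j\in J$ to play $a_j$), so your $O(\delta)$ correction term is actually zero --- but this only strengthens your conclusion.
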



For a given $\ep > 0$,
one undiscounted $\ep$-equilibrium 
under the conditions of Lemma~\ref{lemma:multi} is the following. 
Until some player is declared the deviator,
the players select the mixed action profile 
$\xi$ defined by
\begin{equation}
\label{equ:xi}
\xi_i := 
\left\{
\begin{array}{ll}
x_i, & i \not\in J,\\
(1-\beta)x_i + \beta a_i, & i \in J,
\end{array}
\right.
\end{equation}
where $\beta > 0$ is sufficiently small.
Thus, if no player is declared the deviator, 
the play is eventually absorbed, and the absorbing payoff is $r(a_J,x_{-J})$.

Since $r_i(a_J,x_{-J}) \geq \rho_i(x)$ for each $i \in I$,
no player $i$ can profit more than $\beta$ by deviating in an absorbing way;
namely, by selecting an action $a_i\in A_i$ such that $a_i \not\in \supp(x_i)$ and $p(a_i,x_{-i}) > 0$.
Since the game is positive recursive, no player in $J$ can profit by never selecting $a_i$.

Players not in $J$ can still profit by changing the frequency in which they select actions in $\supp(x_i)$.
Such a deviation can be identified by a standard statistical test.
For more details, see the proof of Lemma~5.3 in \cite{Solan1999ThreePlayerAbsorbing}.

%


\section{Proof of Theorem~\ref{theorem:uniform}}
\label{section:proof}

If the condition of Lemma~\ref{lemma:multi} holds,
then the game admits an undiscounted equilibrium payoff.
Assume then that this condition does not hold.

\subsection{The Auxiliary One-Shot Game $G(w)$}
\label{section:auxiliary:one:shot}

For every vector $w\in \dR^I$
denote by $G(w)$ the one-shot game derived from $\Gamma$ by assuming that if the game does not absorb in the first stage,
then the players get a terminal payoff $w$. Formally, $G(w)$ is the following one-shot game:
\begin{itemize}
\item 
The set of players is $I$, as in $\Gamma$.
\item 
The set of actions of each player $i \in I$ is $A_i$, as in $\Gamma$.
\item 
The payoff function of each player $i \in I$ is
\[ u^w_i(a) := p(a) r_i(a) + (1-p(a)) w_i, \ \ \ \forall a \in A. \]
\end{itemize}

\subsection{Definition of Two Functions $f$ and $\mu$}
\label{section:def:f}

Fix $\ep \in (0,1)$, and set $Y:=\prod_{i \in I} [v_i-\ep,1]$. The set $Y$ contains all payoff profiles that are approximately individually rational for the players.
In this section we define two functions, 
$f : Y \to Y$
and $\mu : Y \to (0,1]$.

Recall that the three sets
$W$, $W_H \setminus W$,
and $\bigcap_{x \in X} W_L(x)$ form a partition of $\dR^I$.
For $w\in Y$, the definition of $f(w)$ and $\mu(w)$ depends on which of these three sets contains $w$.

In each of the following three sections we define $f$ on one of these sets.
For every nonabsorbing mixed action profile $x \in X$ and every player $i \in I$, denote by
$a_{i}(x)$ a best absorbing response of player~$i$ at $x$.

Fix $w\in Y$. 
\subsubsection{Definition of $f$ and $\mu$ on $W \cap Y$}
\label{subsection:f:W}

Assume  $w \in  W \cap Y$.
Then there is $x \in X$ such that 
$w \in W(x)$.
It follows that 
$w_i \geq \rho_i(x)$ for each $i \in I$,
with equality for at least one player, denoted $i_0$.
Set
\begin{equation}
\label{equ:f:w}
f(w) := \ep p(a_{i_0}(x),x_{-i_0}) r(a_{i_0}(x),x_{-i_0})
+ (1-\ep p(a_{i_0}(x),x_{-i_0})) w,
\end{equation}
and
\[ \mu(w) := \ep p(a_{i_0}(x),x_{-i_0}). \]

The quantity $f(w)$ is the payoff in $G(w)$ under the strategy profile
where player~$i_0$ selects the mixed action $(1-\ep)x_{i_0}+\ep a_{i_0}(x)$,
and the other players select the mixed action profile $x_{-i_0}$.

We argue that $f(w) \in Y$.
Fix for a moment $i \in I$.
If $\rho_i(x) > -\infty$,
then $\rho_i(x) \geq v_i > 0$.
Since $w_i\geq \rho_i(x)\geq v_i$ and since $\|f(w)-w\|_\infty\leq \ep$, 
it follows that $f_i(w) \geq v_i - \ep$.
Suppose next that $\rho_i(x) = -\infty$, so that $v_i = 0$.
Since the game is positive, 
$f_i(w) \geq (1-\ep)w_i \geq -(1-\ep)\ep > -\ep = v_i - \ep$.
Since $i$ is arbitrary, $f(w) \in Y$.
\black

\subsubsection{Definition of $f$ and $\mu$ on  $(W_H \setminus W) \cap Y$}
\label{subsection:f:WH}

Assume $w \in  (W_H \setminus W) \cap Y$.
By Lemma~\ref{lemma:WH},
there is $x \in X$ such that $w \in W_H(x)$ and $\calE_J(x) \neq \emptyset$.
Let $(J,a_J) \in \calE_J(x)$.
Since the conditions of Lemma~\ref{lemma:multi} do not hold,
there is $i_0 \in I$ such that $r_{i_0}(a_J,x_{-J}) < \rho_{i_0}(x) - \ep$.

Since $w \in W_H(x)$, we have
$w_i > \rho_i(x)$ for every $i \in I$.
Let $\alpha \in (0,1)$ be the unique real number such that
$(1-\alpha) w_i + \alpha r_i(a_J,x_{-J}) \geq \rho_i(x)$ for every $i \in I$,
with an equality for at least one $i$.

Define
\begin{equation}
\label{equ:f:wh}
f(w) := \alpha r(a_J,x_{-J}) + (1-\alpha) w, 
\end{equation}
and 
\[ \mu(w) = \alpha. \]
Note that $f(w)\geq \rho(x)\geq v$ (component-wise), and hence $f(w)\in Y$.

The quantity $f(w)$ is the payoff in $G(w)$
when with probability $\alpha$ the play is absorbed through $(a_J,x_{-J})$,
and with probability $1-\alpha$ the game is not absorbed.

\subsubsection{Definition of $f$ and $\mu$ on 
 $\bigl(\bigcap_{x \in X} W_L(x)\bigr) \cap Y$}
\label{subsection:f:WL}

Assume $w\in  \bigl(\bigcap_{x \in X} W_L(x)\bigr) \cap Y$. 
Let $\widehat x$ be a Nash equilibrium of the auxiliary one-shot game $G(w)$.
We argue that
$p(\widehat x) > 0$.
Indeed, suppose by contradiction that $p(\widehat x) = 0$,
so that $\widehat x \in X$.
Since $w \in \bigcap_{x \in X}W_L(x) \subseteq W_L(\widehat x)$,
there is $i \in I$ such that $w_i < \rho_i(\widehat x)$.
But then player~$i$ can profit by deviating at $\widehat x$ to $a_i(\widehat x)$,
since in $G(w)$ her payoff under $\widehat x$ is $w_i$,
while her payoff under $(a_i(\widehat x),\widehat x_{-i})$
is a convex combination of $w_i$ and $\rho_i(\widehat x)$.
This contradicts the fact that $\widehat x$ is a Nash equilibrium of $G(w)$.

Set
\[ f(w) := u^w(\widehat x), \]
and
\[ \mu(w) := p(\widehat x). \]

We argue that $f_i(w) \geq v_i - \ep$ for each player $i \in I$, and hence $f(w) \in Y$.
To this end, we will show that player~$i$'s
minmax value in $G(w)$ is at least $v_i - \ep$.
It is well known that for every 
$\lambda \in (0,1]$,
player~$i$'s minmax value in $G((1-\lambda) v_{\lambda})$ is $v_{i,\lambda}$.
By continuity of the minmax value operator,
player~$i$'s minmax value in $G(v)$ is $v_i$.
Since the minmax value operator is nondecreasing and nonexpansive,
and since $w_i \geq v_i - \ep$,
player~$i$'s minmax value in $G(w)$ is at least $v_i - \ep$.

\subsection{Constructing Equilibria in Finite Stage Games}

In this section, we explain how to implement $f(w)$ as an approximate equilibrium payoff in a finite-stage version of $\Gamma$, in which the terminal payoff is $w\in Y$ in case the game does not absorb. In Section~\ref{section:construction} we explain how to concatenate this construction over several blocks.

Given $T\in \dN$ and $w\in Y$, we denote by $G_T(w)$ the $T$-stage game that is derived from $\Gamma$ by assuming that the players get a terminal payoff $w$ if the game does not absorb in the first $T$ stages. The game $G(w)$ introduced in Section~\ref{section:auxiliary:one:shot} thus coincides with $G_1(w)$.

The following result states that for every $w \in Y$ there is $T \in \dN$ sufficiently large,
such that in $G_T(w)$ there is an approximate equilibrium with payoff close to $f(w)$.
The statement of this lemma involves a
stopping time $\tau_i$,
which is interpreted as the first stage in which player $i$ is identified as a deviator. 

\begin{lemma}\label{lemm building block}
For each $w\in Y$ and $\eta>0$, there is $T = T(w,\eta) \in \dN$ such that the game $G_T(w)$ has the following property: There is a stationary strategy profile $\sigma = \sigma(w,\eta)$ and a family $(\tau_i)_{i\in I}$ of stopping times such that: 
\begin{itemize}
\item $\prob_\sigma\left(\min_{i\in I}\tau_i \leq T\right)\leq \eta$.
\item $f(w)=\E_\sigma\left[ r(a^\theta) 1_{\theta \leq T} + w 1_{\theta >T}\right]$  and $\mu(w)= \prob_\sigma(\theta \leq T)$.
\item For each player $i$ and each alternative strategy $\widetilde \sigma_i\in \Sigma_i$, 
\end{itemize}
\begin{align*}
\E_{\widetilde \sigma_i,\sigma_{-i}}&\left[ r_i(a^\theta) 1_{\theta \leq \min\{T, \tau_i\}} + w_i \mathbf{1}_{T< \min\{\theta,\tau_i\}}+v_i1_{\tau_i\leq \min\{T,\theta-1\}}\right]\\ 
&\leq  f(w) +\eta + \ep \prob_{\widetilde \sigma_i,\sigma_{-i}}\left(\tau_i \leq \min\{T,\theta-1\}\right).\end{align*}
\end{lemma}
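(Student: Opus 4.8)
The plan is to treat the three cases of the definition of $f$ (the cases $w\in W\cap Y$, $w\in (W_H\setminus W)\cap Y$, and $w\in(\bigcap_x W_L(x))\cap Y$) somewhat uniformly, since in each case $f(w)$ is by construction the payoff in $G(w)=G_1(w)$ under an explicit stationary mixed profile $\widehat\xi$, obtained from some nonabsorbing $x\in X$ by perturbing it slightly towards an absorbing response. Write $\widehat\xi$ for that profile; in all three cases we have $p(\widehat\xi)=\mu(w)\in(0,1]$, $\E_{\widehat\xi}[r(a)\mathbf 1_{\text{absorb}}] + (1-\mu(w))w = f(w)$, and $\widehat\xi$ is a Nash equilibrium of $G(w)$ up to $\ep$ in the $W$ and $W_H\setminus W$ cases (exactly, in the $W_L$ case). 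The strategy $\sigma(w,\eta)$ will be: play $\widehat\xi$ i.i.d.\ in every stage until either absorption or until some player is flagged as a deviator; the flag $\tau_i$ is triggered by a standard statistical test that monitors the empirical frequencies of player $i$'s actions in $\supp(\widehat\xi_i)$ against the prescribed mixture $\widehat\xi_i$.

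First I would fix the horizon. Under $\sigma$ the game absorbs in each non-flagged stage with probability $p(\widehat\xi)=\mu(w)>0$, so $\prob_\sigma(\theta>T)\le (1-\mu(w))^T\to 0$; choose $T=T(w,\eta)$ large enough that this is below the tolerance needed later. The second bullet, $f(w)=\E_\sigma[r(a^\theta)\mathbf 1_{\theta\le T}+w\mathbf 1_{\theta>T}]$ and $\mu(w)=\prob_\sigma(\theta\le T)$, is \emph{not} exactly true for the i.i.d.\ profile (it would give $1-(1-\mu(w))^T$, not $\mu(w)$); the fix is to have the players \emph{not} perturb $x$ towards the absorbing response in stages $2,\dots,T$ but only in stage $1$ — i.e.\ play $\widehat\xi$ in stage $1$ and then play $x$ forever after (which is nonabsorbing), so that conditional on no flag, absorption happens only in stage $1$, with probability exactly $\mu(w)$ and expected absorbing payoff contributing exactly the absorbing part of $f(w)$, while the $w$-continuation weight is exactly $1-\mu(w)$. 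With that design the second bullet holds as an identity and the first bullet reduces to bounding the false-alarm probability of the statistical test, which can be made $\le\eta$ by a Chernoff/Azuma estimate since under $\sigma$ each player does follow $\widehat\xi_i$ in stage $1$ and $x_i$ thereafter and the test is calibrated to those marginals.

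The substance is the third bullet, the deviation inequality. Fix $i$ and $\widetilde\sigma_i$. Decompose according to the value of $\tau_i\wedge\theta\wedge(T{+}1)$. On $\{\tau_i\le\min\{T,\theta-1\}\}$ the left side contributes $v_i$ times that probability, which is exactly absorbed into the $\ep\,\prob(\tau_i\le\min\{T,\theta-1\})$ slack on the right (here $\ep$ is the ambient small constant and this term is why the slack is there — a flagged player is minmaxed and gets at most $v_i\le f_i(w)+\ep$; more precisely one wants $v_i\le f_i(w)+\ep$, which holds because $f(w)\in Y$ so $f_i(w)\ge v_i-\ep$). On the event that $i$ is never flagged and the game absorbs by stage $T$: since in stage $1$ player $i$ faces $\widehat\xi_{-i}$ and in later stages faces $x_{-i}$ (nonabsorbing), any absorbing payoff $i$ can get in stage $1$ is at most $\rho_i(x)\le f_i(w)+O(\ep)$ (in the $W$ case $f_i(w)=w_i\ge\rho_i(x)$ up to the $\ep$-perturbation; in the $W_H\setminus W$ case $f_i(w)\ge\rho_i(x)$ by the choice of $\alpha$; in the $W_L$ case $\widehat\xi$ is an exact Nash equilibrium of $G(w)$, so $i$ cannot profit at all), and absorbing in any stage $\ge 2$ is impossible without triggering the flag because an absorbing action for $i$ against $x_{-i}$ lies outside $\supp(\widehat\xi_i)$ and the test flags repeated use of such actions. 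On the complementary no-flag, no-absorb event the payoff is $w_i$, which is $\le f_i(w)+O(\ep)$ by the same case analysis (and the $\eta$ absorbs the horizon error $\le\|r\|_\infty(1-\mu(w))^T$).

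The main obstacle is designing and analyzing the statistical test so that, simultaneously, (a) the false-alarm probability is $\le\eta$ (first bullet), (b) any $\widetilde\sigma_i$ that with non-negligible probability gains from \emph{not} being flagged is in fact caught — i.e.\ a deviator who changes the empirical frequency of $\supp(\widehat\xi_i)$ enough to matter is flagged with high probability before reaching stage $T$, so that on the un-flagged event her stage-$1$ and stage-$\ge 2$ behaviour is statistically close to $\widehat\xi_i$ and hence cannot beat $f_i(w)+\eta$, and (c) the bound is uniform in $\widetilde\sigma_i$. This is exactly the bookkeeping in the proof of Lemma~5.3 of \cite{Solan1999ThreePlayerAbsorbing}, and I would import it: take the test to reject when, for some stage $n\le T$, the empirical distribution of player $i$'s realized actions over the first $n$ stages is too far (in variation distance, with a threshold shrinking like $n^{-1/3}$) from $\widehat\xi_i$; the false-alarm bound is a large-deviations estimate, and the ``caught if profitable'' direction is because absorbing against $x_{-i}$ requires putting mass outside $\supp(\widehat\xi_i)$, which the test detects. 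I would also note that since $\widehat\xi$ is stationary the required $T$ and $\tau_i$ depend only on $w$ and $\eta$ (and on fixed data of $\Gamma$ and $\ep$), giving the claimed dependence $T=T(w,\eta)$.
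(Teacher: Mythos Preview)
Your argument fails in the case $w\in (W_H\setminus W)\cap Y$. The assertions that $\widehat\xi$ is an $\ep$-Nash equilibrium of $G_1(w)$ and that ``on the no-flag, no-absorb event the payoff $w_i$ is $\le f_i(w)+O(\ep)$'' are both false there. By the choice of $\alpha$ there is a player $i_1$ with $f_{i_1}(w)=\rho_{i_1}(x)<w_{i_1}$, and the gap $w_{i_1}-f_{i_1}(w)=w_{i_1}-\rho_{i_1}(x)$ is governed by the position of $w$ above $\rho(x)$, not by $\ep$; it can be of order~$1$. Nothing in the construction rules out $i_1\in J$, and in that case the profitable deviation is explicit: since $(J,a_J)$ is a minimal exit, if $i_1$ replaces her stage-$1$ mixed action by $x_{i_1}$ the resulting profile is nonabsorbing, she is never flagged (she uses only actions in $\supp(\widehat\xi_{i_1})$), and she collects $w_{i_1}>f_{i_1}(w)$. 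No single-observation test can distinguish ``played $a_{i_1}$ with probability $0$'' from ``played $a_{i_1}$ with probability $\beta$'' without a false-alarm rate of order $1-\beta$. (A side defect of the one-shot design: to realise $p(\widehat\xi)=\alpha$ via the perturbation in Eq.~\eqref{equ:xi} you would need $\beta=(\alpha/p(a_J,x_{-J}))^{1/|J|}$, which need not be $\le 1$.)

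The paper's construction is not a variant of yours but a genuinely different one, built precisely to make such deviations detectable. In Cases~2 and~3 one plays a perturbation $y$ of $x$ of size $\beta$ in \emph{every} one of $T$ stages, with $\beta$ small and $T$ chosen so that the \emph{total} absorption probability over the block equals $\mu(w)$. Because absorption is spread over many stages, each player's effective contribution is her empirical mixture over the whole block, which can be monitored. The decisive point in Case~3 is that $|J|\ge 2$: the per-stage absorption probability is of order $\beta^{|J|}$, hence $T\sim\beta^{-|J|}$, and the expected number of stages in which a given $i\in J$ plays $a_i$ is $T\beta\sim\beta^{1-|J|}\to\infty$ as $\beta\to 0$, so a frequency test on $a_i$ has power. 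With a single-player exit no such monitoring is possible; this is exactly why a \emph{joint} exit --- and, through Lemma~\ref{lemma:WH}, the non-rectangularity hypothesis --- is needed in this case. Your ``fix'' of concentrating absorption in stage~$1$ discards this mechanism; it also lets any player, in Case~2 or~3, choose her stage-$1$ pure action inside $\supp(x_i)$ to maximise her conditional absorbing payoff, a deviation your test over stages $2,\dots,T$ cannot see and which already exceeds the $\eta$-slack the lemma allows.
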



The first property states that under $\sigma$, it is highly unlikely that some player will be incorrectly identified as a deviator. The second property states that, ignoring $(\tau_i)_{i \in I}$,  the payoff and probability of absorption induced by $\sigma$ coincide with $f(w)$ and $\mu(w)$. 
The last property bounds the expected gain obtained by deviating, assuming that a deviator is punished at her minmax value if it is detected.
The formulation of the last property is important when concatenating several such finite-stage games, to avoid small gains from accumulating into a larger one. 

\bigskip

\begin{proof}
Fix $w\in W$ and $\eta>0$. 
The definition of $\sigma$ and $T$ will reflect the construction of $f$. 

\paragraph{Case 1: $w\in \bigcap_{x \in X} W_L(x)$.}
In that case, there is $\widehat x \in X$ such that $p(\widehat x) > 0$ and $\widehat x$ is a Nash equilibrium of $G(w)$ with payoff $f(w)$ (see Section~\ref{subsection:f:WL}).
We set $T=1$, $\sigma= \widehat x$, 
and no statistical test is employed: $\tau_i= +\infty$ for every $i \in I$.

\paragraph{Case 2: $w\in W$.}
In that case, there is a nonabsorbing mixed action profile $x \in X$ and a player $i_0 \in I$ such that $w \in W(x)$ and
\[ f(w) = (1-\ep p(a_{i_0}(x),x_{-i})) w + \ep p(a_{i_0}(x),x_{-i}) r(a_{i_0}(x),x_{-i}), \]
(see Section~\ref{subsection:f:W}).

The profile $\sigma$ plays repeatedly the mixed action profile $y$ defined by
\[ y_i := \left\{
\begin{array}{ll}
x_i, &i \neq i_0,\\
(1-\beta)x_i + \beta a_{i_0}(x), & i = i_0,
\end{array}
\right.
\]
where $\beta$ is specified below.

Given $\beta$, the duration $T\in \dN$ of the game is such that 
the total probability that the game is absorbed is $\ep$:
\[ (1-\beta p(a_{i_0}(x),x_{-i}))^{T} = 1-\ep. \]

Against $\sigma_{-i}$, a player $i$ may potentially benefit by
\begin{itemize}
    \item increasing or lowering the probability of ever playing $a_{i_0}$, if $i=i_0$;
    \item modifying the relative frequencies of actions in the support of $x_i$, for $i\neq i_0$;
    \item choosing an action not in the support of $a_i$, for any $i\in I$.
\end{itemize}

The first type of deviation is not profitable, given the properties of $a_{i_0}$; the second type of deviation can be monitored by checking that the empirical distribution of each player $i$'s choices remains close to $x_{i}$; the last type of deviation is immediately detected. 

The stopping time $\tau_i$ deals with the latter two types of deviation and is defined to be the first stage in which player $i$ chooses an action which is not in the support of $y_i$, or in which the empirical distribution of player $i$'s actions is not within some appropriate confidence set around $x_i$, that depends on $\eta$. It is set to $\tau_i=+\infty$ otherwise, if both tests are passed.

The precise definition of $(\tau_i)_{i \in I}$ uses standard statistical tests
which appear in, e.g., the proof of Lemma 5.3 in \cite{Solan1999ThreePlayerAbsorbing} or in the proof of Proposition 7 in \cite{vieille2000recursive}. For that reason, we omit details.
\medskip

If $\beta$ is small enough, and for an appropriate confidence set, $\sigma$ and $T$ satisfy the conclusions of Lemma \ref{lemm building block}: (i) undetected deviations modify the distribution of moves by at most $\eta$ and therefore do not affect payoffs by more than $\eta$; (ii) since $f_i(w)\geq \rho_i(w)-\ep\geq v_i-\ep$ for each $i$, detected deviations do not improve payoffs by more than $\ep$.

\paragraph{Case 3: $w\in W_H\setminus W$.}
In that case, there is $x \in X$, a joint exit $(J,a_J) \in \calE_J(x)$, and $\alpha \in (0,1)$, such that $w \in W_H(x)$ and
\[ f(w) = \alpha r(a_J,x_{-J}) + (1-\alpha) w, \]
(see Section~\ref{subsection:f:WH}).

The profile $\sigma$ plays repeatedly the mixed action profile $y$ defined by
\[ y_i := \left\{
\begin{array}{ll}
x_i, &i \not\in J,\\
(1-\beta)x_i + \beta a_i, & i \in J,
\end{array}
\right.
\]
where $\beta$ will be specified below. Given $\beta$, the duration $T$ of the block is set to be such that the total probability that the game is absorbed during the block is $\alpha$:
\[ (1- \beta^{|J|} p(a_J,x_{-J}))^{T} = 1-\alpha. \]

Against $\sigma_{-i}$, a player $i$ may potentially benefit by
\begin{itemize}
    \item modifying the frequency of $a_{i}$, if $i\in J$;
    \item modifying the relative frequencies of actions in the support of $x_i$, for $i\notin J$;
    \item choosing an action not in the support of $a_i$, for any $i\in I$.
\end{itemize}

The latter two types of deviation are prevented as in \textbf{Case 2}, but the former one is different. For $\beta$ small, the definition of $T$ ensures that $T\beta$ is of the order of $\displaystyle \frac{1}{\beta^{|J|-1}}$ up to constants, so that the expected number of stages in which player $i\in J$ chooses $a_i$ increases to $+\infty$ as $\beta \to 0$.
This fact allows players $-i$ to monitor the empirical frequency with which $a_i$ is chosen. We refer to \cite{vieille2000recursive}, Section 8.2 for details.
\end{proof}

\subsection{Constructing a Strategy profile $\sigma^*$}
\label{section:construction} 

We use the function $f$ to construct an approximate equilibrium  $\sigma^*$.
Our construction is inspired by \cite{SolanVieille2001QuittingGames},
see also Section 12.3 in \cite{solan2022course}. 
The play will be divided into blocks. On each block, the strategy profile $\sigma^*$ is obtained by applying  Lemma \ref{lemm building block} to some continuation payoff $w^{(k)}$, as long as the game has not absorbed and no deviator has been identified.

The construction proceeds as follows.
In Step~1 we use an approximate finite orbit of $f$,
to construct a finite sequence $(w^{(k)})_{k=1}^{K_0}$
such that $f(w^{(k+1)})$ is close to $w^{(k)}$.
In Step~2 we will use this sequence to construct a strategy profile in blocks,
such that for each $k=0,\dots,K_0$,
the expected payoff from the beginning of block $k$ will be close to 
$w^{(k)}$.
Within block $k$, 
the strategy profile $\sigma^*$ will 
be derived from $f(w^{(k+1)})$ using Lemma \ref{lemm building block}

In Section~\ref{section:equ:proof} we will prove that $\sigma^*$ is an undiscounted $7\ep$-equilibrium.

\bigskip
\noindent\textbf{Step 1:}
Definition of a sequence of (approximate) continuation payoffs.


Let $\delta \in (0,-\frac{1}{\ln(\ep)})$, so that $\exp(-1/\delta) < \ep$, and hence in particular $\delta < \ep$.
By Theorem~4 in \cite{SolanSolan2020QuittingLCP}, see also Theorem~12.6 in \cite{solan2022course}, 
$f$ has an approximate orbit
that starts at the vector of maximal payoffs.\footnote{Both \cite{SolanSolan2020QuittingLCP} and \cite{solan2022course} prove this result when $f$ has no fixed point.
However, the result also holds when $f$ has fixed points.
To see this, 
note that if $w^{(k)}$ is not $\delta$-close to any fixed point of $f$, for all $k$,
then the proof in \cite{SolanSolan2020QuittingLCP} and \cite{solan2022course} is valid.
If $w^{(k)}$ is $\delta$-close to some fixed point $\widetilde w$ of $f$,
set $w^{(l)} = \widetilde w$ for every $l < k$.
We then obtain \eqref{equ:periodic0}--\eqref{equ:periodic2} with $2\delta$ instead of $\delta$.} 
Formally, this theorem implies the existence of a finite sequence $(w^{(k)})_{k=0}^{K_0}$ that satisfies the following properties:
\begin{align}
\label{equ:periodic0}
&w^{(K_0)} := (1,1,\dots,1),\\
\label{equ:periodic1}
&\sum_{k=0}^{K_0-1} \mu(w^{(k)}) \geq \frac{1}{\delta},\\
&\sum_{k=0}^{K_0-1} \bigl\| w^{(k)} - f(w^{(k+1)})\bigr\|_\infty \leq \delta. 
\label{equ:periodic2}
\end{align}

\noindent\textbf{Step 2:}
Definition of $\sigma^*$.

Fix $\eta^{(0)},\ldots, \eta^{(K_0-1)}$  such that $\displaystyle \sum_{k=0}^{K_0-1} \eta^{(k)} <\ep$. 
For each $k$, 
$0 \leq k \leq K_0-1$, 
we apply Lemma \ref{lemm building block} with $w= w^{(k+1)}$ and $\eta= \eta^{(k)}$. 
We denote by 
$T^{(k)}$, 
$\sigma^{(k)}$ and $(\tau_i^{(k)})_{i\in I}$ 
the horizon,
the stationary profile, and the stopping times for which the conclusion of Lemma \ref{lemm building block} holds.

The strategy profile $\sigma^*$ is defined in $K_0$ blocks. The length of block $k$ is $T^{(k)}$. The cumulative duration of the first $k$ blocks is $\displaystyle N^{(k)}:=\sum_{m=0}^{k-1} T^{(m)}$.

The players follow the sequence $(\sigma^{(k)})_{k=0}^{K_0-1}$ of stationary profiles in the successive blocks as long as no deviator has been identified;
that is, as long as the stopping times $(\tau_i^{(k)})_{i \in I}$ are not reached.
The behavior of the players on each block is monitored by means of the  stopping times $(\tau^{(k)}_i)$. We denote by $\tau_i$ the first stage $n\leq N^{(K_0-1)}$ in which player $i$ is identified as a deviator,
and by $\tau := \min_{i \in I} \tau_i$ the first stage in which some player is identified as the deviator.
Whenever $\tau < \infty$ we let $i_*$ be the minimal index (according to some fixed order of the players) such that $\tau_{i_*} = \tau$.
At stage $\tau$,
players $-i_*$
switch to a stationary minmaxing profile against $i_*$.
The definition of $\sigma^*$ beyond block $K_0$, in the event where no deviation has been detected, is irrelevant.

Eq.~\eqref{equ:periodic1} will ensure that under $\sigma^*$ the play is absorbed with probability close to $1$.
Eq.~\eqref{equ:periodic2} will ensure that the fact that $w^{(k)}$ is not necessarily equal to $f(w^{(k+1)})$ does not distort the players' payoffs too much.

\subsection{$\sigma^*$ is an undiscounted $7\ep$ equilibrium}
\label{section:equ:proof}

For $k=0,\ldots, K_0$, we define 
\[Z^{(k)}:= \left\{ \begin{array}{ll}
r(a^\theta), & \mbox{ if } \theta \leq \min\{N^{(k-1)}, \tau\}, \\
w^{(k)}, & \mbox{ if } N^{(k-1)} < \min\{\theta,\tau\}, \\
v, &\mbox{ if } \tau \leq \min\{N^{(k-1)}, \theta -1\},\end{array}\right.\]
(with $N^{(-1)}=0$). The random variable  $Z^{(k)}$ can be thought of as the expected undiscounted payoff, conditional on the history up to block $k$. If the game has absorbed, the undiscounted payoff is $r(a^\theta)$; the undiscounted payoff is otherwise $w^{(k)}$ if no deviator has been identified.

Denote by $\calH^{(k)}$ the $\sigma$-algebra induced by histories up to the beginning of block $k$. 
Since 
the difference between $f(w^{(k+1)})$ and $\E_{\sigma^{(k)}} \left[ w^{(k+1)} \right]$ stems from the stopping times $(\tau_i^{(k)})$,
the following statement is a rewriting of the properties of $\sigma^{(k)}$ and $\tau^{(k)}$.

\begin{lemma}\label{lemm estimate}
    Under $\sigma^*$, one has with probability 1,
    \begin{equation}\label{approx}\left\|\E_{\sigma^*}\left[ Z^{(k+1)} \mid \calH^{(k)}\right] -Z^{(k)}\right\|_\infty \leq 
    \eta^{(k)} + \|w^{(k)}-f(w^{(k+1)})\|_\infty, \ \ \  \forall k=0,\dots,K_0-1.\end{equation}

Moreover, for each player $i\in I$ and each alternative strategy 
$\widetilde \sigma_{i}$,
\begin{eqnarray}
\nonumber
\E_{\widetilde \sigma_i,\sigma^*_{-i}}\left[Z_i^{(k+1)} \mid \calH^{(k)} \right] &\leq &Z_i^{(k)} + \ep \prob_{\widetilde \sigma_i,\sigma^*_{-i}}\left( N^{(k-1)}\leq \tau_i< N^{(k)}\mid \calH^{(k)}\right) \\
    && + \eta^{(k)} + \left|w^{(k)}_i-f_i(w^{(k+1)})\right|.
    \label{equ:9.1}
\end{eqnarray}
\end{lemma}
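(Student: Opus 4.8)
The plan is to condition on $\calH^{(k)}$ and to read the statement off the properties of $\sigma^{(k)}$ and $\tau^{(k)}$ supplied by Lemma~\ref{lemm building block}. Recall that $Z^{(k)}$ is $\calH^{(k)}$-measurable and that the three clauses in its definition --- the game has already absorbed (with no deviator identified earlier), a deviator has already been identified (before absorption), or neither --- partition the sample space; I would argue separately on each of these three events. On the first event $Z^{(k)}=r(a^\theta)$ with $\theta\le N^{(k-1)}$; since $N^{(k-1)}\le N^{(k)}$, and since the flagging times are non-decreasing and cannot be triggered at a stage preceding an absorption that has already occurred, the first clause of $Z^{(k+1)}$ applies as well, so $Z^{(k+1)}=r(a^\theta)=Z^{(k)}$ whatever strategy player~$i$ uses on block~$k$. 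On the second event $Z^{(k)}=v$; the first deviator and the stage $\tau$ are already fixed, any later absorption occurs at a stage $>\tau$, so the third clause of $Z^{(k+1)}$ applies, giving $Z^{(k+1)}=v=Z^{(k)}$, again under any strategy of player~$i$. In both of these ``frozen'' cases the conditional expectation of $Z^{(k+1)}$ equals $Z^{(k)}$ exactly, so (\ref{approx}) holds with left-hand side $0$ and (\ref{equ:9.1}) holds because its right-hand side exceeds $Z^{(k)}_i$ by non-negative terms.

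It remains to treat the ``active'' event $N^{(k-1)}<\min\{\theta,\tau\}$, on which $Z^{(k)}=w^{(k)}$ and, on block~$k$, the players other than the potential deviator follow the stationary profile $\sigma^{(k)}_{-i}$ of Lemma~\ref{lemm building block} applied with $w=w^{(k+1)}$, $\eta=\eta^{(k)}$, and horizon $T^{(k)}=N^{(k)}-N^{(k-1)}$. For (\ref{approx}), under $\sigma^*$ all players follow $\sigma^{(k)}$; unwinding the definition of $Z^{(k+1)}$ on block~$k$, it coincides with $r(a^\theta)\mathbf{1}_{N^{(k-1)}<\theta\le N^{(k)}}+w^{(k+1)}\mathbf{1}_{\theta>N^{(k)}}$ except on the event that some local flagging time is triggered on block~$k$ before absorption. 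By the second property of Lemma~\ref{lemm building block} the conditional expectation of the former variable equals $f(w^{(k+1)})$; by the first property the exceptional event has conditional probability at most $\eta^{(k)}$, and on it the two variables differ by at most $1$ in each coordinate (all entries are payoffs in $[0,1]$, up to the $\ep$-slack built into $Y$). Hence $\big\|\E_{\sigma^*}[Z^{(k+1)}\mid\calH^{(k)}]-f(w^{(k+1)})\big\|_\infty\le\eta^{(k)}$, and (\ref{approx}) follows from the triangle inequality together with $Z^{(k)}=w^{(k)}$.

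For (\ref{equ:9.1}), I would introduce the variable $L_i$ obtained from $Z^{(k+1)}_i$ by using, in place of the first-flagging stage $\tau$, the stage at which player~$i$ herself is flagged on block~$k$ (ignoring flags of other players). Rewritten in the block-local time scale, the last property of Lemma~\ref{lemm building block} reads exactly $\E_{\widetilde\sigma_i,\sigma^*_{-i}}[L_i\mid\calH^{(k)}]\le f_i(w^{(k+1)})+\eta^{(k)}+\ep\,\prob_{\widetilde\sigma_i,\sigma^*_{-i}}(N^{(k-1)}\le\tau_i<N^{(k)}\mid\calH^{(k)})$, up to the boundary conventions for block~$k$. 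Since each $\sigma^{(k)}_j$ with $j\ne i$ is stationary and the test defining player~$j$'s flagging time depends only on player~$j$'s own realized actions --- which are i.i.d.\ under $\sigma^{(k)}_j$ irrespective of player~$i$'s play --- the event that some $j\ne i$ is flagged on block~$k$ has conditional probability at most $\eta^{(k)}$; on its complement $Z^{(k+1)}_i=L_i$, and on it the difference is at most $1$, so $\E_{\widetilde\sigma_i,\sigma^*_{-i}}[Z^{(k+1)}_i\mid\calH^{(k)}]\le\E[L_i\mid\calH^{(k)}]+\eta^{(k)}$. Combining this with the previous display and with $Z^{(k)}_i=w^{(k)}_i$, $|f_i(w^{(k+1)})-w^{(k)}_i|=|w^{(k)}_i-f_i(w^{(k+1)})|$, gives (\ref{equ:9.1}) up to a universal constant in front of $\eta^{(k)}$; since Lemma~\ref{lemm building block} may be invoked with any positive parameter, this constant can be absorbed (equivalently, the downstream argument only uses $\sum_k\eta^{(k)}<\ep$).

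The routine conditioning, the two frozen cases, and the appeals to Lemma~\ref{lemm building block} are straightforward. The one place that requires genuine care is the bookkeeping on the active event: matching the boundary conventions between the block-local statement of Lemma~\ref{lemm building block} and its instance on block~$k$ (which stage the events $\{\theta\le T\}$ and $\{\tau_i\le T\}$ translate to), checking that each of the several $O(\eta^{(k)})$ corrections --- the $\tau$-dependent perturbation of $Z^{(k+1)}$, and the passage from ``some player $j\ne i$ is flagged'' to ``player $i$ is flagged'' --- is correctly signed, and verifying that player~$i$'s deviation leaves the law of the other players' flagging times unchanged. None of these is conceptually deep, but they are where a careless argument would slip.
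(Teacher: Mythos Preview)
Your proposal is correct and follows the same approach as the paper, which in fact offers no proof beyond the remark that the lemma ``is a rewriting of the properties of $\sigma^{(k)}$ and $\tau^{(k)}$''; you have simply carried out that rewriting in detail, correctly separating the two frozen cases from the active one and invoking the three clauses of Lemma~\ref{lemm building block} on the latter. Your observation that the argument naturally produces a constant $2$ in front of $\eta^{(k)}$ (from the separate $\eta^{(k)}$-bound on false flags of players $j\neq i$) is accurate and, as you note, harmless since the $\eta^{(k)}$ are free and only $\sum_k \eta^{(k)}<\ep$ is used downstream.
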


The bound (\ref{approx}) implies that $Z^{(0)}$ is approximately equal to $\gamma(\sigma^*)$.
Indeed, taking first expectations and then using the triangular inequality, one obtains
\begin{align}
\nonumber
\left\| Z^{(0)}-\E_{\sigma^*}\left[Z^{(K_0)}\right]\right\|_\infty 
&\leq 
\sum_{k=0}^{K_0-1}\black \eta^{(k)} + \sum_{k=0}^{K_0-1}\black \|w^{(k)}- f(w^{(k+1)})\|_\infty\\
&\leq \ep +\delta \leq 2\ep.
\label{equ:10.0}
\end{align}
On the other hand, since the function $\ln(1-x)$ is concave on $[0,1)$,
Eq.~\eqref{equ:periodic1} implies that the 
probability that the game terminates under $\sigma^*$ in the first $K_0$ blocks is at least $1-\exp(-1/\delta)> 1-\ep$.
Since the probability that some player is incorrectly labeled as a deviator is at most $\ep$, 
this implies that 
\begin{equation}
\label{equ:10.1}
\left\| \E_{\sigma^*}\left[Z^{(K_0)}\right] - \gamma(\sigma^*)\right\|_\infty \leq \ep +\ep = 2\ep.
\end{equation}

We next show, using
Eq.~\eqref{equ:9.1},
that no player can improve significantly upon $\sigma^*$. Let $i\in I$ and $ \sigma_i\in \Sigma_i$ be arbitrary. Taking expectations and summing over $k$
in Eq.~\eqref{equ:9.1},
one has 
\begin{equation}\label{dev}\E_{\sigma_i,\sigma^*_{-i}}\left[ Z_i^{(K_0)}\right]\leq Z_i^{(0)} + \ep + \sum_{k=0}^{K_0-1}\black \eta^{(k)} 
+\sum_{k=0}^{K_0-1}\black\left|w^{(k)}_i-f_i(w^{(k+1)})\right| \leq Z_i^{(0)}+3 \ep. \end{equation}
Observe next that the expected payoff induced by $(\sigma_i,\sigma^*_{-i})$ after the $K_0$-th block is at most 1 in the event where $N^{(K_0)}< \min\{\theta,\tau\}$, which implies that $\gamma_i(\sigma_i,\sigma^*_{-i})\leq \E_{\sigma_i,\sigma^*_{-i}}\left[ Z_i^{(K_0)}\right]$. 
By (\ref{dev}), \eqref{equ:10.0}, and~\eqref{equ:10.1},
\[\gamma_i(\sigma_i,\sigma^*_{-i})\leq Z_i^{(0)} +
3\ep \leq 
\E_{\sigma^*}\left[Z^{(K_0)}\right] + 5\ep \leq 
\gamma_i(\sigma^*) + 7\ep.\]
Thus, $\sigma^*$ is an undiscounted $7\ep$-equilibrium.

\section{Discussion and Open Problems}
\label{section:discussion}

As mentioned before, \cite{SolanSolan2021SunspotEquilibrium} proved that an undiscounted equilibrium payoff exists when 
(i) there is exactly one connected component which in rectangular, 
(ii) 
each player has exactly one action that is not a part of any nonabsorbing entry,
and
(iii)
the probability of absorption in all entries is either $0$ or $1$.
A natural question is whether this result can be combined with ours, 
to prove that every positive recursive absorbing game that does not have a rectangular connected component $B^\ell$ in which $|B^\ell_i| \geq 2$ for at most one player admits an undiscounted equilibrium payoff.
At present, we do not know how to prove this result.
The reason is that the proof approach of \cite{SolanSolan2021SunspotEquilibrium} and our approach are different, and it is not clear how to unify these approaches.

A second open problem concerns the extension of Theorem~\ref{theorem:uniform} to recursive absorbing games that are not positive and to absorbing games that are not recursive.
These extensions pose difficulties because, when the game is not positive recursive,
$\rho_i(x)$ may be finite but not higher than player~$i$'s minmax value $v_i$.
In that case, the cutoff for defining the sets $W(x)$, $W_H(x)$, and $W_L(x)$ should be $\max\{\rho_i(x),v_i\}$,
and then it is not clear how to define $f$ when $w \in W(x)$ and the only player $i$ for whom $w_i = \max\{\rho_i(x),v_i\}$ satisfies $w_{i} = v_{i} > \rho_{i}(x)$.

It would also be interesting to extend Theorem~\ref{theorem:uniform} to positive recursive games,
which may include more than a single nonabsorbing state.


\bibliographystyle{chicago}
\bibliography{biblio.bib,solan_publications}


\end{document}